\newcommand{\z}{\mbox{\boldmath $z$}}
\newcommand{\lip}{\operatorname{Lip}}
\newcommand{\ch}{\operatorname{Ch}}
\newcommand{\iso}{\operatorname{Iso}(E_1,E_2)}
\newcommand{\isonly}{\operatorname{Iso}}
\newcommand{\M}{M(E_1,E_2)}
\newcommand{\gp}{G_{\Pi}}
\newcommand{\gpiee}{\gp\cap\iso}
\newcommand{\G}{\gp\cap\iso}
\newcommand{\mc}{\mathbb C}
\newcommand{\tone}{T_1}
\newcommand{\isoa}{\operatorname{Iso}(A_1,A_2)}
\newcommand{\gpa}{G_{\Pi_0}\cap\isoa}
\newcommand{\pep}{p_{\varepsilon}}
\newcommand{\pedonem}{p_{\varepsilon}'\left(\frac{1}{m}\right)}
\newcommand{\qe}{q_{\varepsilon}}
\newcommand{\qedonem}{q_{\varepsilon}'\left(\frac{1}{m}\right)}
\newcommand{\wdonem}{w'\left(\frac{1}{m}-t\right)}
\newcommand{\Imarg}{\operatorname{Im}}
\newcommand{\Real}{\operatorname{Re}}
\newcommand{\mf}{{\mathfrak M}}
\newcommand{\ext}{\operatorname{ext}}
\newcommand{\SSS}{\mathcal{S}}
\newtheorem{theorem}{Theorem}
\newtheorem{lemma}[theorem]{Lemma}
\newtheorem{cor}[theorem]{Corollary}
\newtheorem{prop}[theorem]{Proposition}
\theoremstyle{definition}
\theoremstyle{remark}
\begin{document}
\author{
Osamu~Hatori
}
\address{
Department of Mathematics, Faculty of Science,
Niigata University, Niigata 950-2181, Japan
}
\email{hatori@math.sc.niigata-u.ac.jp
}

\author{
Shiho Oi
}
\address{
Niigata Prefectural Hakkai High School, Minamiuonuma 
949-6681 Japan.
}
\email{shiho.oi.pmfn20@gmail.com
}


\title[]
{2-local isometries on function spaces}

\keywords{2-local maps, surjective isometries, continuously differentiable maps
}

\subjclass[2010]{
46B04,46J15,46J10
}


\begin{abstract}
We study 2-local reflexivity of the set of all surjective isometries between certain function spaces. We do not assume linearity for isometries. We prove that a 2-local isometry in the group of all surjective isometries on the algebra of all continuously differentiable functions on the closed unit interval with respect to several norms is a surjective isometry. We also prove that a 2-local isometry in the group of all surjective isometries on the Banach algebra of all Lipschitz functions on the closed unit interval with the sum-norm is a surjective isometry.
\end{abstract}
\maketitle
\section{Introduction}\label{sec1}

 Motivated by the paper by Kowalski and S\l odkowski \cite{ks}, the concept of 2-locality was introduced by \v Semrl,who obtained the first results on 2-local automorphisms and 2-local derivations \cite{smrl}. Moln\'ar \cite{mol2l} studied 2-local isometries on operator algebras. 
Given a metric space $\mathfrak{M}_j$ for $j=1,2$ an isometry from $\mathfrak{M}_1$ into $\mathfrak{M}_2$ is a distance preserving map. The set of all surjective isometries from $\mathfrak{M}_1$ onto $\mathfrak{M}_2$ is denoted by $\operatorname{Iso}(\mathfrak{M}_1,\mathfrak{M}_2)$, and $\operatorname{Iso}(\mathfrak{M})$ if $\mathfrak{M}_1=\mathfrak{M}_2=\mathfrak{M}$. 
We say a map $T:\mathfrak{M}_1\to \mathfrak{M}_2$ is 2-local in $\operatorname{Iso}(\mathfrak{M}_1,\mathfrak{M}_2)$ if for every pair $x,y\in \mathfrak{M}_1$ there exists a surjective isometry $T_{x,y}\in \operatorname{Iso}(\mathfrak{M}_1,\mathfrak{M}_2)$ such that
\[
\text{
$T(x)=T_{x,y}(x)$ and $T(y)=T_{x,y}(y)$.
}
\] 
In this case we say that $T$ is a 2-local isometry. 
It is obvious by the definition that a 2-local isometry is in fact an isometry, which needs not to be surjective. Hence a 2-local isometry $T$ belongs to $\operatorname{Iso}(\mathfrak{M}_1,\mathfrak{M}_2)$ if $T$ is surjective. 
We say that $\operatorname{Iso}(\mathfrak{M}_1,\mathfrak{M}_2)$ is 2-local reflexive if every 2-local isometry belongs to $\operatorname{Iso}(\mathfrak{M}_1,\mathfrak{M}_2)$. 

If ${\mathfrak M}_j$ is a Banach space, linearity of the maps is a subject of  consideration. Let $\isonly_{\mc}(\mathfrak{M}_1,\mathfrak{M}_2)$ denote the set of all surjective {\it complex-linear} isometries. There exists an extensive literature on 2-local isometries in $\isonly_{\mc}(\mathfrak{M}_1,\mathfrak{M}_2)$ and 2-iso-reflexivity of $\isonly_{\mc}(\mathfrak{M}_1,\mathfrak{M}_2)$ (see, for example, \cite{ahhf,bjm,gy,hmto,jlp,jvvv,lpww,molbook,mol2l}). Note that Hosseini showed that a 2-local {\it real-linear} isometry is in fact a surjective real-linear isometry on the algebra of $n$-times continuously differentiable functions on the interval $[0,1]$ with a certain norm \cite[Theorem 3.1]{hoseini}. She described  that a 2-local real-linear isometry defined on the Banach algebra $C(X)$ of all complex-valued continuous functions on a compact Hausdorff space $X$ which is separable and first countable is in fact a surjective real-linear isometry on $C(X)$ \cite[Proposition 3.2]{hoseini}. At this point we emphasize that the situation is very different from that for the problem of 2-local isometry. 
We do not know 
 if a 2-local isometry defined on $C[0,1]$ is a surjective isometry on $C[0,1]$ or not.
The problem of whether the group of all surjective isometries (without assuming linearity) of $C(X)$ is 2-local reflexive or not has been raised by Moln\'ar 
who has proved a related positive result concerning the group of all surjective isometries in the setting of operator algebras   \cite{mo2loc}.

In this paper we study 2-local reflexivity for $\operatorname{Iso}(\mathfrak{M}_1,\mathfrak{M}_2)$ and we consider the question whether every 2-local isometry necessarily belongs to $\operatorname{Iso}(\mathfrak{M}_1,\mathfrak{M}_2)$, where $\mathfrak{M}_j$ is a certain space of continuous functions. 


\section{Preliminaries}\label{sec2}
Let $X_j$ be a compact Hausdorff space for $j=1,2$. The algebra of all complex-valued continuous functions on $X_j$ is denoted by $C(X_j)$. The supremum norm is denoted by $\|\cdot\|_{\infty}$. In the remaining of the paper $E_j$ is a subspace of $C(X_j)$ which contains the constant functions and separates the points of $X_j$. For $c\in {\mathbb C}$ we write the constant function which takes the value $c$ by $c$. We assume that the norm $\|\cdot\|_j$ is defined on $E_j$ (not necessary complete) and it satisfies that $\|c\|_j=|c|$ for every $c\in {\mathbb C}$. We assume that $E_j$ is conjugate closed in the sense that $f\in E_j$ implies $\bar f\in E_j$, and that $\|f\|_j=\|\bar f\|_j$ for every $f\in E_j$. For an $\epsilon\in \{\pm 1\}$ and $f\in E_j$, $[f]^{\epsilon}=f$ if $\epsilon =1$ and $[f]^\epsilon=\bar f$ if $\epsilon=-1$. Let $M(E_1,E_2)$ be the set of all maps from $E_1$ into $E_2$. 
Note that we say a map is a surjective isometry if it is just a distance preserving map, we do not assume complex nor real linearity on it. We abbreviate $\operatorname{Iso}(E_j,E_j)$ by $\operatorname{Iso}(E_j)$. 
Let $\Pi$ denotes a non-empty set of (not always all) homeomorphisms from $E_2$ onto $E_1$. Let 
\begin{multline*}
\text{$G_{\Pi}(E_1,E_2)=\{T\in M(E_1,E_2):$ there exists a $\lambda\in E_2$,}\\
\text{an $\alpha\in {\mathbb C}$ of unit modulus, a $\pi\in \Pi$, and an $\epsilon\in \{\pm1\}$} \\
\text{such that $T(f)=\lambda + \alpha [f\circ \pi]^\epsilon$ for every $f\in E_1$\}}.
\end{multline*}
We abbreviate $G_{\Pi}(E_j,E_j)$ by $G_{\Pi}(E_j)$. 
We usually abbreviate $G_{\Pi}(E_1,E_2)$ by $G_{\Pi}$ if $E_1$ and $E_2$ are clear from the context. 
Let $\operatorname{Id}_{[0,1]}=\pi_0:[0,1]\to [0,1]$ be the identity function and $\pi_1=1-\operatorname{Id}_{[0,1]}$. Put $\Pi_0=\{\pi_0,\pi_1\}$. 
Kawamura, Koshimizu and Miura \cite{kkm} (cf. \cite{mt}) proved that $G_{\Pi_0}(C^1[0,1])=\operatorname{Iso}(C^1[0,1],\|\cdot\|)$ with respect to several norms including $\|\cdot\|_{\Sigma}$. 
For a compact metric space $K$, let 
\[
\lip(K)=\left\{f\in C(K):L_f=\sup_{x\ne y}\frac{|f(x)-f(y)|}{d(x,y)}<\infty\right\}
\]
with the norm $\|f\|_{\Sigma}=\|f\|_{\infty}+L(f)$ for $f\in \lip(K)$. We say that $L_f$ is the Lipschitz constant for $f$. With this norm $\lip(K)$ is a unital semisimple commutative Banach algebra. 
We show in section \ref{sec4} that $G_{\Pi}(\operatorname{Lip}(K_1),\lip(K_2))=\operatorname{Iso}(\operatorname{Lip}(K_1),\lip(K_2))$, where $\Pi$ is the set of all surjective isometries from $K_2$ onto $K_1$.  

Let 
\begin{multline*}
\text{$W_j=\{f\in E_j:$ if $S:{\mathbb C}\to {\mathbb C}$ is an isometry}\\
\text{ and $S(f(X_j))=f(X_j)$, then $S$ is the identity function\}}.
\end{multline*}
Suppose that $S:{\mathbb C}\to {\mathbb C}$ is an isometry. It is well known that there exists $a,b\in {\mathbb C}$ with $|a|=1$ such that $S(z)=b+az$, ($z\in {\mathbb C}$) or $S(z)=b+a\bar z$, ($z\in {\mathbb C}$). The first case of $S$ is a parallel translation by $b$ if $a=1$ and $S$ is a rotation around $b/(1-a)$. Hence there is no fixed point if $a=1$ and $b\ne 0$, and $b/(1-a)$ is the unique fixed point if $a\ne 1$ for the first case. For the second case, denoting one of the square root of $a$ by $a^\frac12$, $S$ is a symmetric translation with respect to the line $t\mapsto a^\frac12t+i(\operatorname{Im}(\overline{a^\frac12}b)/2)$ ($t\in {\mathbb R}$) followed by the parallel translation by $\operatorname{Re}(\overline{a^\frac12}b)/2)$ to the direction of $a^\frac12$. Hence the fixed points exist and they are all the points on line $t\mapsto a^\frac12t+i(\operatorname{Im}(\overline{a^\frac12}b)/2)$ if and only if $\operatorname{Re}(\overline{a^\frac12}b)/2)=0$. We will prove that $W_j$ for $E_j=C^1[0,1]$ is uniformly dense in $C[0,1]$ (see Proposition \ref{PsubsetBarU}).
\begin{lemma}\label{lemma0}
If $T\in \M$ is 2-local in $\gp\cap\iso$, then $T$ is an isometry with respect to the metric induced by the norm $\|\cdot\|_j$; $\|T(f)-T(g)\|_2=\|f-g\|_1$ for every pair $f,g\in E_1$. The map $T$ is also an isometry with respect to the supremum norm $\|\cdot\|_\infty$.
\end{lemma}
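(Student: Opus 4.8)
The plan is to read off both assertions directly from the 2-locality hypothesis, using for the second one the explicit form of the members of $\gp$. First I would dispose of the $\|\cdot\|_j$-isometry claim, which is essentially a restatement of the definition. Fix $f,g\in E_1$ and let $T_{f,g}\in\G$ be the surjective isometry furnished by 2-locality, so that $T(f)=T_{f,g}(f)$ and $T(g)=T_{f,g}(g)$. Because $T_{f,g}\in\iso$ is distance preserving for the norms $\|\cdot\|_j$, I immediately get
\[
\|T(f)-T(g)\|_2=\|T_{f,g}(f)-T_{f,g}(g)\|_2=\|f-g\|_1,
\]
and since $f,g$ were arbitrary this gives the first conclusion.

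For the supremum-norm statement I would use the additional information that $T_{f,g}$ lies in $\gp$, hence has the form $T_{f,g}(h)=\lambda+\alpha[h\circ\pi]^\epsilon$ for some $\lambda\in E_2$, some unimodular $\alpha$, some $\pi\in\Pi$, and some $\epsilon\in\{\pm1\}$. The key step is the identity
\[
T_{f,g}(f)-T_{f,g}(g)=\alpha\left([f\circ\pi]^\epsilon-[g\circ\pi]^\epsilon\right)=\alpha[(f-g)\circ\pi]^\epsilon,
\]
in which the translation term $\lambda$ cancels and additivity of both the conjugation $[\,\cdot\,]^\epsilon$ and the composition with $\pi$ is used. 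Passing to the supremum norm, and noting that $|\alpha|=1$, that conjugation leaves moduli unchanged, and that composition with the surjective homeomorphism $\pi$ preserves the supremum so that $\|(f-g)\circ\pi\|_\infty=\|f-g\|_\infty$, I arrive at
\[
\|T(f)-T(g)\|_\infty=\|T_{f,g}(f)-T_{f,g}(g)\|_\infty=\|f-g\|_\infty,
\]
which is the second conclusion.

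I do not expect a genuine obstacle here: the whole argument is a direct unwinding of the two defining properties. The only point deserving care is the supremum-norm computation, where one must check that each of the three operations making up an element of $\gp$---multiplication by a unimodular scalar, the optional conjugation $[\,\cdot\,]^\epsilon$, and composition with the surjective homeomorphism $\pi$---preserves the supremum norm of a difference, together with the cancellation of $\lambda$. Once this bookkeeping is recorded, both halves of the lemma follow at once from the 2-locality hypothesis.
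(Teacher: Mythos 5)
Your proposal is correct and follows essentially the same route as the paper: the first claim is read off directly from the 2-locality hypothesis and the isometry property of $T_{f,g}$, and the second uses the explicit form $T_{f,g}(h)=\lambda+\alpha[h\circ\pi]^\epsilon$, with the translation $\lambda$ cancelling in the difference and the supremum norm preserved by the unimodular factor, the conjugation, and composition with the surjective $\pi$. The only cosmetic difference is that you combine the difference into $\alpha[(f-g)\circ\pi]^\epsilon$ before taking norms, whereas the paper estimates $\|[f\circ\pi]^\epsilon-[g\circ\pi]^\epsilon\|_\infty$ directly; the content is identical.
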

\begin{proof}
Let $f,g\in E_1$. Then there exists $T_{f,g}\in \gp\cap\iso$ such that 
\begin{equation}\label{77}
T(f)=T_{f,g}(f),\qquad T(g)=T_{f,g}(g).
\end{equation}
As $T_{f,g}$ is an isometry we have
\[
\|T(f)-T(g)\|_2=\|T_{f,g}(f)-T_{f,g}(g)\|_2=\|f-g\|_1.
\]
Thus $T$ is an isometry. 
As $T_{f,g}\in \gp\cap\iso$ there exists a $\lambda_{f,g}\in E_2$, an $\alpha_{f,g}\in \mc$ of unit modulus, a $\pi\in \Pi$ and an $\epsilon_{f,g}\in \{\pm 1\}$ such that 
\begin{equation}\label{78}
T_{f,g}(h)=\lambda_{f,g}+\alpha_{f,g}[h\circ \pi]^{\epsilon_{f,g}}, \quad h\in E_1.
\end{equation}
Then by \eqref{77} and \eqref{78} we observe that
\begin{multline*}
\|T(f)-T(g)\|_{\infty}=\|T_{f,g}(f)-T_{f,g}(g)\|_{\infty}\\
= \|\alpha_{f,g}[f\circ \pi]^{\epsilon}-\alpha_{f,g}[g\circ \pi]^{\epsilon}\|_{\infty}\\
=\|[f\circ \pi]^{\epsilon}-[g\circ \pi]^{\epsilon}\|_{\infty}=\|f-g\|_{\infty}
\end{multline*}
since $\pi$ is a surjection. Thus $T$ is an isometry with respect to the supremum norm. 
\end{proof}
\begin{prop}\label{pro1}
Suppose that $T\in \M$ is 2-local in $\gpiee$. Then there exists $\epsilon\in \{\pm 1\}$ and $\alpha\in {\mathbb C}$ of unit modulus such that for every $f\in W_1$ there exists a homeomorphism $\pi_f\in \Pi$ such that
\[
T(f)=T(0)+\alpha[f\circ\pi_f]^\epsilon.
\]
\end{prop}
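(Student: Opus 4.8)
The plan is to extract the representation from the weakest consequence of $2$-locality, namely pairing each argument with the zero function, and then to pin down the scalar data using the rigidity built into $W_1$.

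First I would pair an arbitrary $f$ with $0$. By $2$-locality there is $T_{f,0}\in\gpiee$ with $T(f)=T_{f,0}(f)$ and $T(0)=T_{f,0}(0)$, and writing $T_{f,0}(h)=\lambda_{f}+\alpha_{f}[h\circ\pi_{f}]^{\epsilon_{f}}$ one gets $\lambda_{f}=T_{f,0}(0)=T(0)$, since $0\circ\pi_f=0$ and $[0]^{\epsilon_f}=0$. Hence for every $f$,
\[
T(f)-T(0)=\alpha_{f}[f\circ\pi_{f}]^{\epsilon_{f}},
\]
with the \emph{same} additive term $T(0)$ for all $f$. This is the pointwise form; the whole content of the proposition is that $\alpha_f$ and $\epsilon_f$ can be chosen independently of $f$ on $W_1$.

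Second, for $f\in W_1$ I would show that the pair $(\alpha_f,\epsilon_f)$ is determined by $T$ and $f$ alone. As $\pi_f$ is onto $X_1$, the function $[f\circ\pi_f]^{\epsilon_f}$ has range $f(X_1)$ or $\overline{f(X_1)}$, so the range of the fixed function $T(f)-T(0)$ equals $S_f\big(f(X_1)\big)$, where $S_f\colon\mathbb{C}\to\mathbb{C}$ is the isometry $S_f(z)=\alpha_f[z]^{\epsilon_f}$. If a second admissible representation produced $\tilde S$, then $S_f(f(X_1))=\tilde S(f(X_1))$, so $\tilde S^{-1}S_f$ is a self-isometry of $\mathbb{C}$ fixing $f(X_1)$ setwise; the defining property of $W_1$ forces $\tilde S^{-1}S_f=\id$, whence $\alpha_f,\epsilon_f$ are well defined (in particular each $f\in W_1$ must be non-constant).

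Third — and here the real difficulty lies — I must show $S_f$ does not depend on $f\in W_1$. I would fix a reference $f_0\in W_1$, set $\alpha:=\alpha_{f_0}$, $\epsilon:=\epsilon_{f_0}$, and for arbitrary $f\in W_1$ invoke $2$-locality for the pair $(f,f_0)$, obtaining a single $T_{f,f_0}(h)=\lambda+\alpha'[h\circ\pi']^{\epsilon'}$ that represents $f$ and $f_0$ simultaneously. The obstruction is that the additive term $\lambda=T_{f,f_0}(0)$ need not equal $T(0)$, and in these spaces it is genuinely a non-constant function, so one cannot compare ranges anchored at $T(0)$ with ranges anchored at $\lambda$. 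The device I would use to bypass this is to pass to \emph{differences}, where the additive term cancels:
\[
T(f)-T(f_0)=\alpha'\,[(f-f_0)\circ\pi']^{\epsilon'},
\]
so the range of $T(f)-T(f_0)$ equals $S'\big((f-f_0)(X_1)\big)$ with $S'(z)=\alpha'[z]^{\epsilon'}$. When $f-f_0\in W_1$, the uniqueness of the second step (applied to the difference) identifies $S'$ intrinsically, and the task reduces to reconciling this common $S'$ with $S_f$ and $S_{f_0}$. I expect the crux to be exactly this reconciliation: propagating equality of the scalar and conjugation data across all of $W_1$, for which I would use that $W_1$ is rich enough (stable under the relevant scalings and differences, and uniformly dense as in Proposition \ref{PsubsetBarU}) together with the two norm-isometry properties of $T$ from Lemma \ref{lemma0}. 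Controlling the translation term $T_{f,g}(0)-T(0)$, rather than establishing the pointwise form, is where the weight of the argument will fall.
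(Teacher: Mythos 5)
Your first two steps are sound and the first one coincides exactly with the paper's opening move: pairing with $0$ gives $T(f)-T(0)=\alpha_f[f\circ\pi_f]^{\epsilon_f}$, and the $W_1$ property makes the isometry $S_f(z)=\alpha_f[z]^{\epsilon_f}$ well defined. But the proposition lives entirely in your third step, and what you offer there is a plan with two genuine gaps. First, pairing $f$ with a non-constant reference $f_0$ and passing to differences requires $f-f_0\in W_1$, and $W_1$ is not closed under differences: if $f\in W_1$ then also $\bar f\in W_1$ (conjugate any isometry fixing $\overline{f(X_1)}$ setwise by $z\mapsto\bar z$ to get one fixing $f(X_1)$), yet $f-\bar f=2i\operatorname{Im} f$ has range inside the imaginary axis, which the non-identity isometry $z\mapsto-\bar z$ fixes pointwise; hence $f-\bar f\notin W_1$. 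Second, even when $f-f_0\in W_1$, identifying the map $S'$ with $T(f)-T(f_0)=S'\bigl((f-f_0)\circ\pi'\bigr)$ does not reconcile it with $S_f$ and $S_{f_0}$: you are left with the functional equation $S_f(f\circ\pi_f)-S_{f_0}(f_0\circ\pi_{f_0})=S'\bigl((f-f_0)\circ\pi'\bigr)$ involving three a priori unrelated homeomorphisms $\pi_f,\pi_{f_0},\pi'$, and range comparison gives no purchase on it. You explicitly defer this point (``I expect the crux to be exactly this reconciliation''), so the argument stops precisely where the difficulty begins; in particular the global $\alpha$ and $\epsilon$ of the statement are never actually produced, only anchored at $f_0$ contingent on the unproven alignment.

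The idea you are missing is the paper's anchor: pair each $f\in W_1$ with \emph{constants}, not with another non-constant function. Constants annihilate the homeomorphism mismatch, since $c\circ\pi=c$ for every $\pi$. Concretely, the pairings $(c,d)$ with $c,d\in\mc$ show that $T_0=T-T(0)$ restricts to an isometry of $\mc$ fixing $0$, hence $T_0(z)=\alpha z$ for all $z$ or $T_0(z)=\alpha\bar z$ for all $z$; this is where the global $\alpha$ and $\epsilon$ come from. After normalizing to $T_1=[\bar\alpha T_0]^{\epsilon}$, which fixes every constant and is still 2-local, one pairs $f\in W_1$ with a constant $c$: the equation at $c$ forces the translation term $\lambda_{f,c}$ to be a constant, and comparing the range of $T_1(f)$ computed from the pair $(f,c)$ with that from the pair $(f,0)$ gives $f(X_1)=S\bigl(f(X_1)\bigr)$ for an explicit isometry $S$ of $\mc$, so the $W_1$ property yields $\lambda_{f,c}=0$ and $\alpha_{f,c}=\alpha_{f,0}$. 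The choice $c=1$ then gives $\alpha_{f,0}=1$, and playing a real $c_0$ against $c_0+i$ rules out the conjugate case, giving $\epsilon_{f,0}=1$, i.e. $T_1(f)=f\circ\pi_{f,0}$. Thus every individual representation is aligned with the action on constants, which is exactly the $f$-independence your difference scheme could not reach.
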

Note that if we proved that $\pi_f$ did not depend on $f$, then the map $T$ were surjective, hence $T\in \iso$ by the Mazur-Ulam theorem. But it is not the case in general (cf. \cite[Theorem 2.3]{hmto}); $\gp\cap\iso$ needs not be 2-local reflexive in $\M$.
\begin{proof}[Proof of Proposition \ref{pro1}]
Put $T_0=T-T(0)$. We infer by a simple calculation that $T_0$ is 2-local in $\gpiee$. Let $h\in E_1$. Since $T_0$ is 2-local in $\gpiee$, there exist $T_{h,0}\in \gpiee,\lambda_{h,0}\in E_2, \alpha_{h,0}\in \mc$ with $|\alpha_{h,0}|=1$, a homeomorphism $\pi_{h,0}:X_2\to X_2$ and and $\epsilon_{h,0}\in \{\pm 1\}$ such that 
\begin{eqnarray}\label{-1}
T_0(h)&=&T_{h,0}(h)=\lambda_{h,0}+\alpha_{h,0}[h\circ\pi_{h,0}]^{\epsilon_{h,0}}, \\
0=T_0(0)&=&T_{h,0}(0)=\lambda_{h,0}.\nonumber
\end{eqnarray}
Note that $T_{h,0}$ is represented by $T_{h,0}(\cdot)=\alpha_{h,0}[\cdot\circ\pi_{h,0}]^{\epsilon_{h,0}}$ on $E_1$.
Hence $\lambda_{h,0}=0$ and 
\begin{equation}\label{(0)}
T_0(h)=\alpha_{h,0}[h\circ \pi_{h,0}]^{\epsilon_{h,0}}.
\end{equation}
In particular, if $h=c\in \mc$, then 
\[
T_0(c)=\alpha_{c,0}[c]^{\epsilon_{h,0}}.
\]
Thus we obtain $T_0(\mc)\subset \mc$. For every pair $c,d\in \mc$, here exists $T_{c,d}\in \G$ such that 
\[
T_0(c)=T_{c,d}(c),\quad T_0(d)=T_{c,d}(d).
\]
As $T_{c,d}\in \iso$ we infer that
\begin{multline*}
|T_0(c)-T_0(d)|=\|T_0(c)-T_0(d)\|_2\\
=\|T_{c,d}(c)-T_{c,d}(d)\|_2=\|c-d\|_1=|c-d|.
\end{multline*}
As $c,d$ are arbitrary, we have that $T_0|_{\mc}:\mc\to\mc$ is an isometry.
Applying a well known result about the form of an isometry on $\mc$, there exists an $\alpha\in \mc$ such that
\begin{equation}\label{(1)}
T_0(z)=\alpha z,\quad z\in \mc,
\end{equation}
or
\begin{equation}\label{(2)}
T_0(z)=\alpha \bar z,\quad z\in \mc. 
\end{equation}
Put $T_1=[\bar{\alpha}T_0]^{\epsilon}$, where $\epsilon=1$ if \eqref{(1)} holds and $\epsilon=-1$ if \eqref{(2)} holds. Since $E_2$ is conjugate closed, $T_1$ is well defined in the second case. Since $\|f\|_2=\|\bar f\|_2$ for every $f\in E_2$, it is a routine work to see that  $T_1$ is 2-local in $\gpiee$. By the definition of $T_1$ we infer that $T_1(z)=z$ for every $z\in \mc$. We will prove that for every $f\in W_1$ there exists a homeomorphism $\pi_f:X_2\to X_1$ such that 
\[
T_1(f)=f\circ\pi_f.
\]
If it is proved, then we have
\[
T(f)=T(0)+\alpha[f\circ\pi_f]^{\epsilon},
\]
the desired form.

Let $f\in W_1$ and $c\in \mc$. As $\tone$ is 2-local in $\G$, there exists $\lambda_{f,c}\in E_2$ and $\alpha_{f,c}\in \mc$ of modulus 1, a homeomorphism $\pi_{f,c}:X_2\to X_1$, and $\epsilon_{f,c}\in \{\pm 1\}$ such that
\begin{equation}\label{(3)}
T_0(f)=T_{f,c}(f)=\lambda_{f,c}+\alpha_{f,c}[f\circ \pi_{f,c}]^{\epsilon_{f,c}},\end{equation}
\begin{equation}\label{(4)}
c=T_0(c)=\lambda_{f,c}+\alpha_{f,c}[c]^{\epsilon_{f,c}},
\end{equation}
where $T_{f,c}(\cdot)=\lambda_{f,c}+\alpha_{f,c}[\cdot\circ\pi_{f,c}]^{\epsilon_{f,c}}$. 
By \eqref{(4)} we infer that $\lambda_{f,c}$ is a constant. By comparing \eqref{(0)} for $f$ with \eqref{(3)} we get
\[
\alpha_{f,0}[f\circ\pi_{f,0}]^{\epsilon_{f,0}}=\lambda_{f,c}+\alpha_{f,c}[f\circ\pi_{f,c}]^{\epsilon_{f,c}},
\]
hence
\begin{equation}\label{(5)}
f\circ \pi_{f,0}=\left[\overline{\alpha_{f,0}}\lambda_{f,c}+\overline{\alpha_{f,0}}\alpha_{f,c}[f\circ\pi_{f,c}]^{{\epsilon}_{f,c}}\right]^{\epsilon_{f,0}}.
\end{equation}
Considering the range of the both side of \ref{(5)} we get
\begin{equation}\label{*}
f(X_1)=\left[\overline{\alpha_{f,0}}\lambda_{f,c}+\overline{\alpha_{f,0}}\alpha_{f,c}[f(X_1)]^{{\epsilon}_{f,c}}\right]^{\epsilon_{f,0}}.
\end{equation}
We have four possibility depending on (i)$\epsilon_{f,0}=1$ and $\epsilon_{f,c}=1$; (ii)$\epsilon_{f,0}=1$ and $\epsilon_{f,c}=-1$; (iii)$\epsilon_{f,0}=-1$ and $\epsilon_{f,c}=-1$; (iv)$\epsilon_{f,0}=-1$ and $\epsilon_{f,c}=1$. Then we have that at least one of the following (i) through (iv) holds.
\[
{\rm(i)}\quad f(X_1)=\overline{\alpha_{f,0}}\lambda_{f,c}+\overline{\alpha_{f,0}}\alpha_{f,c}f(X_1);
\]
\[
{\rm(ii)}\quad f(X_1)=\overline{\alpha_{f,0}}\lambda_{f,c}+\overline{\alpha_{f,0}}\alpha_{f,c}\overline{f(X_1)};
\]
\[
{\rm(iii)}\quad f(X_1)=\alpha_{f,0}\overline{\lambda_{f,c}}+\alpha_{f,0}\overline{\alpha_{f,c}}f(X_1);
\]
\[
{\rm(iv)}\quad f(X_1)=\alpha_{f,0}\overline{\lambda_{f,c}}+\alpha_{f,0}\overline{\alpha_{f,c}}\overline{f(X_1)}.
\]
corresponding to the cases (i) through (iv) respectively. Let $S_j:\mc\to\mc$ be defined by $S_1(z)=\overline{\alpha_{f,0}}\lambda_{f,c}+\overline{\alpha_{f,0}}\alpha_{f,c}z$, $z\in \mc$; $S_2(z)=\overline{\alpha_{f,0}}\lambda_{f,c}+\overline{\alpha_{f,0}}\alpha_{f,c}\overline{z}$, $z\in \mc$; $S_3(z)=\alpha_{f,0}\overline{\lambda_{f,c}}+\alpha_{f,0}\overline{\alpha_{f,c}}z$, $z\in \mc$; $S_4(z)=\alpha_{f,0}\overline{\lambda_{f,c}}+\alpha_{f,0}\overline{\alpha_{f,c}}\overline{z}$, $z\in \mc$. Then $S_j$ is an isometry on $\mc$ for $j=1,2,3,4$. Using $S_j$ we rewrite (i) by $f(X_1)=S_1(f(X_1))$, (ii) by $f(X_1)=S_2(f(X_1))$, (iii) by $f(X_1)=S_3(f(X_1))$ and (iv) by $f(X_1)=S_4(f(X_1))$. As $f\in W_1$ we see that (ii) and (iv) do not occur. We have $\overline{\alpha_{f,0}}\lambda_{f,c}=0$ and $\overline{\alpha_{f,0}}\alpha_{f,c}=1$ for the case (i). Hence $\lambda_{f,c}=0$ and $\alpha_{f,0}=\alpha_{f,c}$ in this case. In the same way we have $\lambda_{f,0}=0$ and $\alpha_{f,0}=\alpha_{f,c}$ for the case (iii).
We conclude that only (i) or (iii) occur, and  in any case 
\[
\text{$\lambda_{f,c}=0$ and $\alpha_{f,0}=\alpha_{f,c}$}.
\]

To prove $\alpha_{f,0}=1$, put $c=1$. Then (i) or (iii) occur. If (i) occurs, then 
\[
1=T_0(1)=T_{f,1}(1)=\alpha_{f,1}1=\alpha_{f,0}1.
\]
If (iii) occurs, then
\[
1=T_0(1)=T_{f,1}(1)=\alpha_{f,1}\bar 1=\alpha_{f,0}1.
\]
It follows that 
\begin{equation}\label{alpha=0}
\alpha_{f,0}=1.
\end{equation}

Next we prove that (iii) does not occur for any $c\in \mc$ and $\epsilon_{f,0}=1$. Suppose that (iii) occurs for some $c_0\in \mc$. Then we have 
\[
c_0=T_0(c)=T_{f,c_0}(c_0)=\alpha_{f,c_0}\overline{c_0}=\alpha_{f,0}\overline{c_0}=\overline{c_0}.
\]
Hence $c_0$ is a real number. We also have
\begin{equation}\label{(6)}
T_0(f)=T_{f,c_0}(f)=\overline{f\circ\pi_{f,c_0}}
\end{equation}
since $\alpha_{f,c_0}=\alpha_{f,0}=1$. On the other hand, (iii) does not occur for $c_0+i$ since $c_0+i$ is not a real number. Thus (i) occurs and
\begin{equation}\label{(7)}
T_0(f)=T_{f,c_0+i}(f)=f\circ\pi_{f,c_0+i}.
\end{equation}
It follows by \eqref{(6)} and \eqref{(7)} that 
\[
\overline{f(X_1)}=f(X_1),
\]
which is a contradiction since $f\in W_1$. We conclude that (iii) does not occur for any $c\in \mc$. It follows that only (i) occurs, hence we have 
\begin{equation}\label{epsilon=1}
\epsilon_{f,0}=1.
\end{equation}
Then by \eqref{(0)} for $h=f$ we have
\[
T_0(f)=T_{f,0}(f)=f\circ \pi_{f,0}.
\]
Letting $\pi_f=\pi_{f,0}$ we have the conclusion.
\end{proof}
\section{Spaces of continuous functions on $[0,1]$}\label{sec3}
In this section $A_j$ ($j=1,2$) is a subspace of $C[0,1]$ and a superspace of $C^1[0,1]$, the space of all complex-valued continuously differentiable functions on the interval $[0,1]$. We assume that $A_j$ satisfies the following three conditions.
\begin{itemize}
\item[1)] $A_j$ is conjugate-closed in the sense that $f\in A_j$ implies $\bar f\in A_j$;
\item[2)] The norm  $\|\cdot\|_j$ on $A_j$ satisfies  that $|c|=\|c\|_j$ for every $c\in \mc$; 
\item[3)] The norm  $\|\cdot\|_j$ on $A_j$ satisfies  that $\|f\|_j=\|\bar f\|_j$ for every $f\in A_j$, where $\bar{\cdot}$ denotes the complex-conjugation;
\end{itemize}
We do not assume the completeness of $\|\cdot\|_j$. 
The space $A_j$ satisfies the conditions for $E_j$ in the previous section. The difference between $A_j$ and $E_j$ is that $A_j$ is defined on $[0,1]$ and we assume that $C^1[0,1]\subset A_j$. The spaces $(C^1[0,1],\|\cdot\|_{\Sigma})$, $(C^1[0,1],\|\cdot\|_M)$, $(\operatorname{Lip}[0,1],\|\cdot\|_{\Sigma})$, $(\operatorname{Lip}[0,1],\|\cdot\|_{M})$ and $(C[0,1],\|\cdot\|_{\infty})$ are typical examples of $A_j$. Recall that $\pi_0$ is the identity function on $[0,1]$ and $\pi_1=1-\pi_0$, $\Pi_0=\{\pi_0,\pi_1\}$. Kawamura, Koshimizu and Miura \cite{kkm} studies the space $C^1[0,1]$ with a variety of norms including $\|\cdot\|_{\Sigma}$ and $\|\cdot\|_M$.  Recall that
\begin{multline*}
\text{$W_1=\{f\in A_j:$ if $S:{\mathbb C}\to {\mathbb C}$ is an isometry}\\
\text{ and $S(f(X_j))=f(X_j)$, then $S$ is the identity function\}}.
\end{multline*}
Put $P=\{p+iq:$$p$ and $q$ are polynomials of real-coefficients.$\}$. Many polynomials are in $W_1$, but some are not. For example $(t-\frac12)^4+i(t-\frac12)^3\not\in W_1$. We do no know if $P\cap W_1$ is uniformly closed in $P$ or not. We have that following. Let $cl(\cdot)$ denote the uniform closure on $[0,1]$.
\begin{prop}\label{PsubsetBarU}
We have $P\subset cl(W_1)$. Hence $cl(W_1)=C[0,1]$.
\end{prop}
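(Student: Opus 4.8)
The plan is to establish the first assertion $P\subset cl(W_1)$ and then read off $cl(W_1)=C[0,1]$ essentially for free: every element of $P$ lies in $C^1[0,1]\subset A_j$, and $P$ is uniformly dense in $C[0,1]$ by Weierstrass, so once $P\subset cl(W_1)$ is known we get $C[0,1]=cl(P)\subset cl(W_1)\subset C[0,1]$. To prove $P\subset cl(W_1)$ it is enough, for each $g\in P$ and each $\delta>0$, to produce one $h\in P$ with $\|h-g\|_{\infty}<\delta$ and $h\in W_1$; then $g\in cl(W_1)$. First I would restate membership in $W_1$ geometrically. By the description of the isometries of $\mc$ recalled in the Preliminaries, $h\in W_1$ precisely when the compact set $K=h([0,1])\subset\mc$ admits no nontrivial isometry $S$ of $\mc$ with $S(K)=K$. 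Since $K$ is compact no nonzero translation can preserve it, and any $S$ with $S(K)=K$ carries the smallest enclosing circle of $K$ onto itself and hence fixes its center; thus the symmetry group of $K$ fixes a point and is (a conjugate of) a subgroup of $O(2)$. So the task is to perturb $g$ slightly, inside $P$, so that $K$ has trivial symmetry group.

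The key tool I would isolate is the following elementary sufficient condition. Let $K\subset\mc$ be compact with more than one point, and suppose: (a) there is a unique pair $\{a,b\}\subset K$ with $|a-b|=\operatorname{diam}(K)$; (b) writing $\ell$ for the line through $a$ and $b$, there is a unique $d\in K$ maximizing $\operatorname{dist}(\cdot,\ell)$ over $K$, and this maximum is positive; and (c) $|a-d|\ne|b-d|$. Then the only isometry $S$ with $S(K)=K$ is the identity. The proof is short: $S$ preserves $\operatorname{diam}(K)$, so $S(\{a,b\})=\{a,b\}$ by (a); hence $S(\ell)=\ell$, so $S$ preserves distance to $\ell$ and therefore fixes $d$ by (b); if $S$ interchanged $a$ and $b$ we would get $|a-d|=|S(a)-S(d)|=|b-d|$, contradicting (c), so $S$ fixes each of $a,b,d$; and (b) forces $d\notin\ell$, so these three points are not collinear and $S=\operatorname{Id}$.

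It then remains to realize (a)--(c) by an arbitrarily small perturbation of $g$ within $P$, and I would do this in stages, each stage preserving the open conditions secured before it. If $g([0,1])$ is a single point or lies in a line, a small polynomial perturbation of the imaginary (or real) part first makes $K$ genuinely two-dimensional, so that $\operatorname{diam}(K)>0$ and $K\not\subset\ell$. To force (a), I would fix one pair $(s_*,t_*)$ at which the real-analytic function $(s,t)\mapsto|g(s)-g(t)|$ attains its maximum and add a small polynomial displacement concentrated near $s_*$ and $t_*$ that pushes $g(s_*)$ and $g(t_*)$ apart along the segment joining them; a first-order computation shows this strictly raises the value at $(s_*,t_*)$ while, for a sufficiently localized displacement, leaving all pairs away from $(s_*,t_*)$ essentially unchanged, so that afterwards $\{a,b\}$ is the unique diameter pair. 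With $\{a,b\}$ and $\ell$ fixed and stable under further tiny perturbations, the same localized-bump device applied to $t\mapsto\operatorname{dist}(h(t),\ell)$ makes the farthest point $d$ unique, and finally, should $|a-d|=|b-d|$, sliding $d$ a little parallel to $\ell$ (which preserves its distance to $\ell$, hence (b)) breaks the equality and gives (c). Keeping the total sup-norm of all perturbations below $\delta$ yields the required $h$.

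The hard part will be the unique-diameter step: the diameter is a global maximum of a two-parameter function whose maximizing set may a priori be large, and I must show that a single small, polynomial (hence globally supported) perturbation isolates one maximizing pair without creating competing maxima elsewhere. The delicate points are to approximate the desired localized displacement by a genuine polynomial while keeping its tails small compared with the definite first-order gain it produces at $(s_*,t_*)$, and to control the second-order behaviour near $(s_*,t_*)$ so that no nearby pair overtakes it. Once this is in hand, conditions (b) and (c) follow by the same mechanism together with the stability under small perturbations of (a) and (b), and the sufficient-condition lemma closes the argument.
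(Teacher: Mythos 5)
Your rigidity lemma is correct: if $K=h([0,1])$ has (a) a unique diameter pair $\{a,b\}$, (b) a unique farthest point $d$ from the line $\ell$ through $a,b$, at positive distance, and (c) $|a-d|\ne |b-d|$, then the only isometry of $\mathbb{C}$ preserving $K$ is the identity; and the reduction (produce, for each $g\in P$ and $\delta>0$, some $h$ with $\|h-g\|_{\infty}<\delta$ satisfying (a)--(c)) would indeed prove the proposition. The gap is that the perturbation producing such an $h$ is never actually constructed, and the difficulty you yourself flag as ``the hard part'' is a structural one that your scheme, as described, cannot absorb: uniqueness of a maximizer is \emph{not} an open condition under sup-norm perturbations, so the premise of the staged construction --- ``each stage preserving the open conditions secured before it'' --- fails as stated. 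After stage one you know $(s,t)\mapsto |h_1(s)-h_1(t)|$ has a unique maximizing point pair, but the gap to the maximum tends to $0$ near the maximizer, so a stage-two perturbation of arbitrarily small sup norm can split the maximum into several point pairs. Restoring stability requires quantitative control (for instance a nondegenerate Hessian at the maximizing parameters together with later perturbations that are small in $C^2$, not merely in $C^0$), and since your perturbations are forced to be polynomials --- which cannot be compactly supported, and which can be uniformly tiny while having enormous derivatives --- none of this comes for free. As written, the existence of the perturbation, which is the entire analytic content of the proposition, is asserted rather than proved.

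The constraint that creates this difficulty, namely $h\in P$, is self-imposed: $W_1$ only requires membership in $A_1$, and $C^1[0,1]\subset A_1$. The paper exploits exactly this freedom and takes a genuinely different route. It approximates $p+iq$ by functions $f_m\in C^1[0,1]$ that agree with a slightly adjusted polynomial on $[1/m,1]$ and are built from $w(t)=t^3\sin(1/t)$ on $[0,1/m]$; the oscillation of $w'$ makes the curve $f_m([0,1])$ have infinitely many tangent lines of one fixed slope, with tangent points accumulating at $f_m(1/m)$, while every other slope occurs only finitely often. Since an isometry of $\mathbb{C}$ takes parallel tangent lines of the range to parallel tangent lines, a self-isometry of $f_m([0,1])$ must permute this infinite set, fix its unique accumulation point, then (comparing distances to that point) fix each of its points, and these fixed points are not collinear --- so the isometry is the identity. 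In other words, the paper gets rigidity from a differential invariant (tangent directions) concentrated by a deliberately non-polynomial oscillation, instead of from metric invariants (diameter, farthest point) of a polynomial curve, and this is precisely what sidesteps the stability-of-unique-maximizer problem your construction runs into. If you want to salvage your own argument, drop the requirement $h\in P$ and use compactly supported $C^1$ bumps: stage one is then a genuinely local modification, the farthest-point parameters in stage two are automatically bounded away from the diameter parameters (points of the curve near $a$ and $b$ are close to $\ell$), and sliding $d$ parallel to $\ell$ in stage three preserves all distances to $\ell$ exactly; with supports kept disjoint, each stage leaves the earlier conditions untouched and your lemma finishes the proof.
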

\begin{proof}
Let $f=p+iq\in P$ and $\varepsilon>0$. If $p$ is not a constant, then put $p_{\varepsilon}=p$. If $p$ is a constant, then put $p_{\varepsilon}=p+\varepsilon \pi_0$, where $\pi_0$ is the identity function on $[0,1]$. Let $l$ be any positive integer greater than both of the degree of $p$ and $q$. Put $q_{\varepsilon}=q+\varepsilon \pi_0^{l}$. Then $p_{\varepsilon}$ is not a constant and there is no pair of complex numbers $c$ and $d$ such that $p_{\varepsilon}=c\qe +d$ since the degree of the each side of the equation is different. We prove that $p_{\varepsilon}+iq_{\varepsilon}\in cl(W_1)$. Then $p+iq\in cl(W_1)$ follows since $p_{\varepsilon}+iq_{\varepsilon}$ uniformly converges on $[0,1]$ to $p+iq$ as $\varepsilon\to 0$. 

Since $p_{\varepsilon}$ is a non-constant polynomial, there exists a positive integer $m_0$ such that $p_{\varepsilon}'(\frac{1}{m})\ne 0$ for every $m\ge m_0$. Let $m\ge m_0$. Put
\begin{equation*}
f_m(t)=
\begin{cases}
iw\left(\frac{1}{m}-t\right)
+\left(p_{\varepsilon}'\left(\frac{1}{m}\right)+iq_{\varepsilon}'\left(\frac{1}{m}\right)\right)\left(t-\frac{1}{m}\right)\\
\hspace{2cm}+\pep\left(\frac{1}{m}\right)
+i\qe\left(\frac{1}{m}\right), & 0\le t\le\frac{1}{m}, \\
(\pep+i\qe)(t), & \frac{1}{m}\le t\le 1, 
\end{cases}
\end{equation*}
where 
\begin{equation*}
w(t)=
\begin{cases}
0,& t=0 \\
t^3\sin \frac{1}{t},& 0<t\le 1
\end{cases}
\end{equation*}
Then $f_m\in C^1[0,1]$ for every $m\ge m_0$. It is a routine work to prove that $f_m$ converges uniformly to $p+iq$ on $[0,1]$ and a proof is omitted. We prove that $f_m\in W_1$ for every $m\ge m_0$.

Let $K$ be a real number. We look at the number of the points $t$ on $[0,1]$, where $f_m(t)$ is a tangent point of a tangent line of $f_m([0,1])$ whose slope is $K$. The curve $f_m([0,1])$ has a tangent line of the slope $K$ at the tangent point $f_m(t)$ if and only if 
\[
K=\lim_{\delta\to 0}\frac{\Imarg f_m(t+\delta)-\Imarg f_m(t)}{\Real f_m(t+\delta)-\Real f_m(t)}.
\]
Suppose that $0\le t\le \frac{1}{m}$. Since $\Real f_m(t)=\pedonem\left(t-\frac{1}{m}\right)+\pep\left(\frac{1}{m}\right)$ and $\Imarg f_m(t)=w\left(\frac{1}{m}-t\right)+\qe'\left(\frac{1}{m}\right)(t-\frac{1}{m})+\qe \left(\frac{1}{m}\right)$, we have 
\[
\lim_{\delta\to 0}\frac{\Imarg f_m(t+\delta)-f_m(t)}{\Real f_m(t+\delta)-\Real f_m(t)}=\frac{-\wdonem+\qedonem}{\pedonem},\quad0\le t\le \frac{1}{m}.
\]
Hence the curve $f_m([0,1])$ has a tangent line of the slope $K$ at the tangent point $f_m(t)$ for $0\le t\le \frac{1}{m}$ if and only if
\begin{equation}\label{*}
\frac{-\wdonem+\qedonem}{\pedonem}=K.
\end{equation}
If $K\ne \frac{\qedonem}{\pedonem}$, the number of such points $0\le t\le \frac{1}{m}$ is at most finite.
(The reason is as follows. Suppose that $\frac{\qedonem}{\pedonem}\ne K=\frac{-\wdonem+\qedonem}{\pedonem}$. Then
\begin{equation}\label{(10)}
\wdonem
=\pedonem\left(\frac{\qedonem}{\pedonem}-K\right).
\end{equation}
On the other hand, a simple calculation shows that
\begin{equation}\label{big}
\left|\wdonem\right|\le 4\left(\frac{1}{m}-t\right).
\end{equation}
We have $\pedonem\left(\frac{\qedonem}{\pedonem}-K\right)\ne 0$ since $\frac{\qedonem}{\pedonem}\ne K$. 
By \eqref{big} there is no $t\le \frac{1}{m}$ with 
\[
\frac{1}{m}-t< \frac14\left|\pedonem\left(\frac{\qedonem}{\pedonem}-K\right)\right|
\]
such that \eqref{(10)} holds. It is easy to see that the number of $t\ge 0$ with 
\[
\frac14\left|\pedonem\left(\frac{\qedonem}{\pedonem}-K\right)\right|\le 
\frac{1}{m}-t
\] 
such that \eqref{(10)} holds is at most finite.) On the other hand if $K=\frac{\qedonem}{\pedonem}$, then by \eqref{*} we infer that $\wdonem=0$. By a calculation, for every positive integer $k$ there exists a unique $k\pi<s_k<k\pi+\pi/2$ such that $w'\left(\frac{1}{s_k}\right)=0$. Letting $t_k=\frac{1}{m}-\frac{1}{s_k}$ we have $w'\left(\frac{1}{m}-t_k\right)=0$. Thus $K=\frac{\qedonem}{\pedonem}$ for $0\le t<\frac{1}{m}$ if and only if $t=t_k$ for some positive integer $k$. As $w'(0)=0$, we see that $\wdonem=\frac{\qedonem}{\pedonem}$ if $t=\frac{1}{m}$. 
We conclude that the set of the all points in $f_m([0,\frac{1}{m}])$ at which $f_m([0,1])$ has a tangent line with the slope $\frac{\qedonem}{\pedonem}$ is $\{f_m(t_n)\}_{n\ge N}\cup \left\{f_m\left(\frac{1}{m}\right)\right\}$, where $N=\min\left\{k:\frac{1}{m}>t_k\right\}$.

Suppose that $\frac{1}{m}<t\le 1$. We have $\Real f_m(t)=\pep(t)$ and $\Imarg f_m(t)=\qe(t)$. Therefore we have
\[
\frac{\Imarg f_m(t+\delta)-\Imarg f_m(t)}{\Real f_m(t+\delta)-\Real f_m(t)}=
\frac{\qe (t+\delta)-\qe(t)}{\pep(t+\delta)-\pep(t)}.
\]
Hence the curve $f_m([0,1])$ has a tangent line of the slope $K$ at $f_m(t)$ for $\frac{1}{m}< t\le 1$ if and only if 
\[
\lim_{\delta\to 0}\frac{\qe(t+\delta)-\qe(t)}{\pep(t+\delta)-\pep(t)}=K.
\]
If $\pep(t)\ne 0$, then $\frac{\qe'(t)}{\pep'(t)}=K$. The number of such points $t\in \left.\left(\frac{1}{m},1\right.\right]$ is at most finite. Suppose not. Then $\qe'(t)=K\pep'(t)$ for infinitely many $t$, hence $\qe'=K\pep'$ on the interval $\left(\left.\frac{1}{m},1\right.\right].$ since $\pep'$ and $\qe'$ are polynomials. It follows that $\qe=K\pep +c$ for some $c\in \mc$, which contradicts to our hypothesis on $\pep$ and $\qe$. We obtain that the number of $t\in \left.\left(\frac{1}{m},1\right.\right]$ such that $\frac{\qe'(t)}{\pep'(t)}=K$ is at most finite. The number of $t\in \left.\left(\frac{1}{m},1\right.\right]$ such that $\pep'(t)=0$ is at most finite since $\pep$ is a polynomial. We conclude that the number of point $t$ such that $f_m([0,1])$ has a tangent line of the slope $K$ at $f_m(t)$ 
is at most finite.
In a way similar we see that the number of $t\in \left.\left(\frac{1}{m},1\right.\right]$ such that $f_m([0,1])$ has a tangent line at $f_m(t)$ which is parallel to the imaginary axis is at most finite.
If $\pep'(t)\ne 0$, then $f_m([0,1])$ has a tangent line which is not parallel to the imaginal axis. Hence if $f_m([0,1])$ has a tangent line with a tangent point at $f_m(t)$ which is parallel to the imaginary axis, then $\pep'(t)=0$. Thus the number of such points is at most finite.

We conclude that for a real number $K\ne\frac{\qedonem}{\pedonem}$ the number of $t\in [0,1]$ such that $f_m([0,1])$ has a tangent line of the slope $K$ at $f_m(t)$ is at most finite; the number of $t\in [0,1]$ such that $f_m([0,1])$ has a tangent line which is parallel to the imaginal axis at $f_m(t)$ is at most finite; the number of $t\in [0,1]$ such that $f_m([0,1])$ has a tangent line of the slope $\frac{\qedonem}{\pedonem}$ at $f_m(t)$ is countable, the number of such $t\in \left.\left(\frac{1}{m},1\right.\right]$ is at most finite, say $\{t_{-n}\}_{n=1}^l$, there is a sequence $\{t_k\}_{k\ge N}$ in $[0,\frac{1}{m})$ of such points that converges to $\frac{1}{m}$. Denote them as $\{t_k\}=\{t_{k}\}_{k\ge N}\cup\{t_{\infty}=\frac{1}{m}\}\cup\{t_{-n}\}_{n=1}^l$.

Suppose that $S:\mc\to\mc$ is an isometry such that $S(f_m([0,1]))=f_m([0,1])$. We prove that $S$ is the identity so that $f_m\in W$. Since there are $a,b\in \mc$ with $|a|=1$ such that $S(z)=b+az$, $z\in \mc$ or $S(z)=b+a\bar z$, $z\in \mc$, $S(\ell_1)$ and $S(\ell_2)$ are parallel for every pair of parallel lines $\ell_1$ and $\ell_2$ in $\mc$. Thus the parallel tangent line at $\{t_k\}$  are translated by $S$ as a parallel tangent line. Hence we get
\[
\{S(f_m(t_k))\}=\{f_m(t_k)\}.
\]
As $S$ is an isometry the unique cluster points $t_{\infty}$ of $\{t_k\}$ translates to $t_{\infty}$ by $S$; $S(t_{\infty})=t_{\infty}$. As $\{f_m(t_{-k})\}_{k=1}^l$ is discrete, there is a positive integer $M$ such that $\{S(f_m(t_k))\}_{k\ge M}\subset \{f_m(t_k)\}_{k\ge N}$. Hence if $n\ge M$, then there is an $n_1\ge N$ such that 
\[
|f_m(t_{\infty})-f_m(t_n)|=|S(f_m(t_{\infty}))-S(f_m(t_n))|
=|f_m(t_{\infty})-f_m(t_{n_1})|.
\]
If $n,n_1\ge N$ and $n\ne n_1$, then by the definition of $t_n$ and $t_{n_1}$ we have 
\[
|f_m(t_{\infty})-f_m(t_n)|\ne |f_m(t_{\infty})-f_m(t_{n_1})|.
\]
It follows that $t_n=t_{n_1}$. Since $S:\mc\to\mc$ is an isometry, the set of the fixed point of $S$ is empty or a singleton or points on a straight line if $S$ is not the identity. As $\{f_m(t_n)\}_{n\ge M}$ is a set of fixed points which are not on the line since $w\left(\frac{1}{m}-t_n\right)>0$ when $n$ is an even number and $w\left(\frac{1}{m}-t_n\right)<0$ when $n$ is an odd number. Thus we conclude that $S$ is an isometry.

By the Weierstrass approximation theorem we see that $cl(W_1)=C[0,1]$.
\end{proof}
\begin{theorem}\label{2-localgeneral}
$\gpa$ is 2-local reflexive in $M(A_1,A_2)$.
\end{theorem}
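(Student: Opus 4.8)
The plan is to reduce to the normalized map produced by Proposition \ref{pro1}, then use the rigidity built into $W_1$ to force a single homeomorphism, and finally deduce surjectivity almost formally. First I would invoke Proposition \ref{pro1} together with the construction in its proof: there exist $\alpha\in\mc$ with $|\alpha|=1$ and $\epsilon\in\{\pm1\}$ so that the map $T_1:=[\bar\alpha(T-T(0))]^\epsilon$ is again 2-local in $\gpa$, fixes every constant ($T_1(z)=z$ for $z\in\mc$), and satisfies $T_1(f)=f\circ\pi_f$ for each $f\in W_1$, with $\pi_f\in\Pi_0=\{\pi_0,\pi_1\}$. By Lemma \ref{lemma0}, $T_1$ is an isometry for $\|\cdot\|_\infty$. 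I will call $f\in W_1$ \emph{asymmetric} if $f\circ\pi_0\ne f\circ\pi_1$; for such $f$ the index $\pi_f$ is uniquely determined by $T_1(f)$.

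The heart of the argument is to show that $\pi_f$ is the same for all asymmetric $f\in W_1$. Here I would pair two asymmetric functions $f,g\in W_1$ and apply 2-locality: pick $T_{f,g}\in\gpa$, say $T_{f,g}(h)=\lambda+\beta[h\circ\pi]^\eta$ with $\pi\in\Pi_0$, agreeing with $T_1$ at $f$ and at $g$. Comparing ranges in $f\circ\pi_f=T_1(f)=\lambda+\beta[f\circ\pi]^\eta$ gives $f([0,1])=\lambda+\beta[f([0,1])]^\eta$, so the $\mc$-isometry $z\mapsto\lambda+\beta[z]^\eta$ fixes $f([0,1])$ setwise; since $f\in W_1$ this forces it to be the identity, i.e.\ $\lambda=0$, $\beta=1$, $\eta=1$. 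Hence $T_{f,g}(h)=h\circ\pi$ for every $h$, so $f\circ\pi_f=f\circ\pi$ and $g\circ\pi_g=g\circ\pi$, and asymmetry of $f$ and $g$ yields $\pi_f=\pi=\pi_g$. Fixing one asymmetric $f_0\in W_1$ (one exists, since otherwise $cl(W_1)$ would lie in the proper closed subspace of symmetric functions, contradicting Proposition \ref{PsubsetBarU}) and setting $\pi^*:=\pi_{f_0}$, every asymmetric $g\in W_1$ satisfies $\pi_g=\pi^*$, while every symmetric $f\in W_1$ satisfies $f\circ\pi^*=f=T_1(f)$ trivially. Thus $T_1(f)=f\circ\pi^*$ for all $f\in W_1$.

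I would then extend this to all of $A_1$ by density. Since $cl(W_1)=C[0,1]\supseteq A_1$, any $f\in A_1$ is a uniform limit of $f_n\in W_1$; as both $T_1$ and $h\mapsto h\circ\pi^*$ are $\|\cdot\|_\infty$-isometries, $\|T_1(f)-f\circ\pi^*\|_\infty=\lim_n\|T_1(f_n)-f_n\circ\pi^*\|_\infty=0$, so $T_1(f)=f\circ\pi^*$ on $A_1$. Unwinding the definition of $T_1$ gives
\[
T(f)=T(0)+\alpha[f\circ\pi^*]^\epsilon,\qquad f\in A_1,
\]
placing $T$ in $G_{\Pi_0}(A_1,A_2)$. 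For surjectivity, apply 2-locality to $\{f_0,0\}$ to get $T_{f_0,0}\in\gpa$ with $T_{f_0,0}(0)=T(0)$ and $T_{f_0,0}(f_0)=T(f_0)$; the same range argument plus asymmetry of $f_0$ identifies its homeomorphism with $\pi^*$. Since $T_{f_0,0}$ is a genuine surjective isometry of the form $\mu+\beta[\,\cdot\circ\pi^*]^\eta$, surjectivity forces $A_1\circ\pi^*=A_2$, hence $A_2\circ\pi^*=A_1$ as $\pi^*$ is an involution. For $k\in A_2$ the function $f:=[\bar\alpha(k-T(0))]^\epsilon\circ\pi^*$ then lies in $A_1$ (using that $A_2$ is a conjugate-closed subspace and $A_2\circ\pi^*=A_1$) and satisfies $T(f)=k$. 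Therefore $T\in\isoa$, so $T\in\gpa$.

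I expect the second paragraph to be the main obstacle: obtaining a single $\pi^*$. The decisive idea is that pairing two \emph{asymmetric} members of $W_1$ — rather than a function with a constant, as in Proposition \ref{pro1} — lets the defining property of $W_1$ collapse the representing map $T_{f,g}$ all the way to a pure composition $h\mapsto h\circ\pi$, which controls both functions simultaneously. The surjectivity step is comparatively routine once one notices that the operator identity $A_2\circ\pi^*=A_1$ is itself forced by the mere existence in $\gpa$ of a surjective isometry whose homeomorphism is $\pi^*$.
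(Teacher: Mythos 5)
Your overall skeleton (normalize via Proposition \ref{pro1}, force a single homeomorphism $\pi^*$ on $W_1$, extend by $\|\cdot\|_\infty$-density using Lemma \ref{lemma0}, then deduce surjectivity) matches the paper's, and your final density and surjectivity paragraphs are sound \emph{given} the middle step. But the step you yourself call the heart of the argument contains a genuine gap. By the definition of $G_{\Pi_0}(A_1,A_2)$, the representing map for the pair $(f,g)$ has the form $T_{f,g}(h)=\lambda+\beta[h\circ\pi]^\eta$ where $\lambda$ is an arbitrary element of $A_2$, i.e.\ a \emph{function}, not a complex number. Your range comparison ``$f([0,1])=\lambda+\beta[f([0,1])]^\eta$, so the $\mc$-isometry $z\mapsto\lambda+\beta[z]^\eta$ fixes $f([0,1])$ setwise'' silently treats $\lambda$ as a scalar; when $\lambda$ is non-constant there is no such isometry of $\mc$, and the set $\{\lambda(t)+\beta[f(\pi(t))]^\eta:t\in[0,1]\}$ need not be any rigid image of $f([0,1])$ at all. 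With $\lambda$ free, agreement at $f$ merely \emph{defines} $\lambda=f\circ\pi_f-\beta[f\circ\pi]^\eta$ and imposes no constraint, and subtracting the two agreements leaves only $f\circ\pi_f-g\circ\pi_g=\beta([f\circ\pi]^\eta-[g\circ\pi]^\eta)$, a mixed equation in $f$ and $g$ to which the defining property of $W_1$ (a statement about a single function's range) cannot be applied; $\pi_f=\pi_g$ does not follow. Note that in Proposition \ref{pro1} the same range trick was legitimate only because one member of each pair was a \emph{constant} $c$: the agreement $c=\lambda_{f,c}+\alpha_{f,c}[c]^{\epsilon_{f,c}}$ is exactly what forced $\lambda_{f,c}$ to be constant. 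Pairing two non-constant functions with unknown images loses that anchor.

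The paper's proof is built precisely to get around this obstruction. It first manufactures an anchor whose $T_1$-image is known: using the family $g_\varepsilon=\pi_0+i\varepsilon\pi_0^2\in W_1$ and the $\|\cdot\|_\infty$-isometry property from Lemma \ref{lemma0}, it shows the associated homeomorphisms $\pi_\varepsilon\in\Pi_0$ stabilize for small $\varepsilon$, whence $T_1(\pi_0)=\pi_0\circ\pi$ for one fixed $\pi$. Then each $f_0\in W_1$ is paired with $\pi_0$ rather than with another unknown function: the agreement at $\pi_0$ determines the translation part explicitly as an affine function, $\lambda_{f_0,\pi_0}(t)=(1-\alpha_{f_0,\pi_0})t$ or $(1+\alpha_{f_0,\pi_0})t-\alpha_{f_0,\pi_0}$ (equations \eqref{(51)} and \eqref{(52)}), and only with $\lambda$ thus pinned down does the case analysis using $f_0\in W_1$ yield a contradiction. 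To salvage your argument you would need this anchoring device (or some other mechanism to control $\lambda$); as written, the collapse of $T_{f,g}$ to a pure composition $h\mapsto h\circ\pi$ is unjustified, and everything downstream of it (the common $\pi^*$, hence the extension to $A_1$ and the surjectivity step) rests on it.
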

\begin{proof}
Suppose that $T\in M(A_1,A_2)$ is 2-local in $\gpa$. Then by Proposition \ref{pro1} there exist an $\alpha\in \mc$ of unit modulus and $\epsilon\in \{\pm 1\}$ which satisfies that for every $f$ in $W_1$, 
there exists a $\pi_f\in \Pi_0$ such that 
\[
T(f)=T(0)+\alpha[f\circ \pi_f]^{\epsilon}.
\]
We prove that $\pi_f$ is independent of $f\in W_1$. Let $T_1\in M(A_1,A_2)$ be defined by 
\[
T_1(h)=[\bar \alpha(T(h)-T(0))]^{\epsilon}, \quad h\in A_1.
\]
Then $T_1$ is 2-local in $\gpa$. As in the same way in the proof of Proposition \ref{pro1} there exists a $T_{f,0}\in \gpa$ such that
\begin{equation}\label{(19)}
T_1(f)=T_{f,0}(f)=f\circ\pi_f
\end{equation}
for every $f\in W_1$. Let $\varepsilon>0$ is given. Then $g_{\varepsilon}=\pi_0+i\varepsilon\pi_0^2 \in W_1$. Hence there exists $T_{g_{\varepsilon},0}\in \gpa$ and $\pi_{\varepsilon}\in \Pi_0$ such that
\begin{equation}\label{(11)}
T_1(g_{\varepsilon})=T_{g_{\varepsilon,0}}(g_{\varepsilon})=g_{\varepsilon}\circ \pi_{\varepsilon}.
\end{equation}
Note that $T_{g_{\varepsilon,0}}(h)=h\circ \pi_{\varepsilon}$ for every $h\in A_1$ by the proof of Proposition \ref{pro1}. (In fact, due to the note just after \eqref{-1} we have $T_{g_{\varepsilon},0}(h)=\alpha_{g_{\varepsilon,0}}[h\circ \pi_{g_{\varepsilon,0}}]^{\epsilon_{g_{\varepsilon},0}}$ for $h\in A_1$. As $g_{\varepsilon}\in W_1$, we have by \eqref{alpha=0} and \eqref{epsilon=1} and letting $\pi_{g_{\varepsilon,0}}=\pi_{\varepsilon}$ we have $T_{g_{\varepsilon,0}}(h)=h\circ \pi_{\varepsilon}$ for every $h\in A_1$.)
We prove that there exists an $\varepsilon>0$ such that $\pi_{\varepsilon}=\pi_{\varepsilon'}$ for every $0<\varepsilon,\varepsilon'<\varepsilon_0$. Suppose not. Then there exist sequences $\{\varepsilon_n\}$ and $\{\varepsilon_n'\}$ of positive real numbers which converge to $0$ respectively such that $\pi_{\varepsilon_n}\ne \pi_{\varepsilon_n'}$ for every $n$. By Lemma \ref{lemma0} $T_1$ is a isometry with respect to $\|\cdot\|_{\infty}$, hence we infer that
\[
\|T_1(g_{\varepsilon_n})-T_1(\varepsilon_n')\|_{\infty}=\|g_{\varepsilon_n}-g_{\varepsilon_n'}\|_{\infty}=\|\varepsilon_n-\varepsilon_n'\|=|\varepsilon_n-\varepsilon_n'|\to 0
\]
as $n\to \infty$. On the other hand, as $\pi_{\varepsilon_n}\ne\pi_{\varepsilon_n'}$ for every $n$ we have
\[
\|T_1(g_{\varepsilon_n})-T_1(\varepsilon_n')\|_{\infty}=
\|g_{\varepsilon_n}\circ \pi_{\varepsilon_n}-g_{\varepsilon_n'}\circ\pi_{\varepsilon_n'}\|_{\infty} \ge \|\pi_{\varepsilon_n}-\pi_{\varepsilon_n'}\|_{\infty}-\varepsilon_n-\varepsilon_n' \to 1
\]
as $n\to \infty$, which is a contradiction proving $\pi_{\varepsilon}=\pi_{\varepsilon'}$ for every $0<\varepsilon,\varepsilon'<\varepsilon_0$ for some positive $\varepsilon_0$. Put the common $\pi_{\varepsilon}$ as $\pi$. Letting $\varepsilon\to 0$ in \eqref{(11)} we get
\[
T_1(\pi_0)=\pi_0\circ\pi.
\]
We prove that
\begin{equation}\label{(100)}
T_1(f)=f\circ \pi
\end{equation}
for every $f\in W$. We show a proof for the case where $\pi=\pi_0$. A proof for the case where $\pi=\pi_1$ is similar, and is omitted.

If $T_1(f)=f$ for every $f\in W_1$ is proved, then it turns out that $T_1$ is a surjective isometry. The reason is as follows. For a sufficiently small positive $\varepsilon$, we have proved $\pi_{\varepsilon}=\pi_0$ since we assume $\pi=\pi_0$ in \eqref{(100)}. Then $T_{g_{\varepsilon,0}}=T_1$ on $W_1$. Proposition \ref{PsubsetBarU} asserts that $W_1$ is uniformly dense in $C[0,1]$, hence in $A_1$. As $T_1$ is continuous with respect to $\|\cdot\|_{\infty}$ by Lemma \ref{lemma0}, we conclude that $T_1=T_{g_\varepsilon,0}$ on $A_1$. Since $T_{g_{\varepsilon}}$ is a surjective isometry we conclude that $T_1$ is a surjective isometry.
We prove that $T_1(f)=f$ for every $f\in W_1$. To prove it, suppose that there exists a $f_0\in W_1$ such that $T_1(f_0)\ne f_0$. Then by \eqref{(19)} we have
\begin{equation}\label{(30)}
T_1(f_0)=f_0\circ\pi_1.
\end{equation}
As $T_1$ is 2-local in $\gpa$, there exists a $\lambda_{f_0,\pi_0}\in A_2$, $\alpha_{f_0,\pi_0}\in \mc$ of unit modulus and $\epsilon_{f_0,\pi_0}\in \{\pm 1\}$ such that one of 
\begin{eqnarray}\label{(13)}
f_0\circ \pi_1=T_1(f_0)=\lambda_{f_0,\pi_0}+ \alpha_{f_0,\pi_0}[f_0]^{\epsilon_{f_0,\pi_0}},\\
\pi_0=T_1(\pi_0)=\lambda_{f_0,\pi_0}+\alpha_{f_0,\pi_0}\pi_0 \nonumber
\end{eqnarray}
and
\begin{eqnarray}\label{(13')}
f_0\circ\pi_1=T_1(f_0)=\lambda_{f_0,\pi_0}+\alpha_{f_0,\pi_0}[f_0\circ\pi_1]^{\epsilon_{f_0,\pi_0}}, \\
 \pi_0=T_1(\pi_0)=\lambda_{f_0,\pi_0}+\alpha_{f_0,\pi_0}\pi_1\nonumber
\end{eqnarray}
holds. Thus
\begin{equation}\label{(51)}
\lambda_{f_0,\pi_0}(t)=(1-\alpha_{f_0,\pi_0})t,\quad t\in [0,1]
\end{equation}
when \eqref{(13)} occurs and 
\begin{equation}\label{(52)}
\lambda_{f_0,\pi_0}(t)=(1+\alpha_{f_0,\pi_0})t-\alpha_{f_0,\pi_0},\quad t\in [0,1]
\end{equation}
when \eqref{(13')} occurs. 

We will prove that both of \eqref{(13)} and \eqref{(13')} are impossible. Suppose that \eqref{(13)} occurs. Rewriting the first equation of \eqref{(13)} using \eqref{(51)} we get
\begin{equation}\label{(40)}
f_0(1-t)=(T_1(f_0))(t)=(1-\alpha_{f_0,\pi_0})t +\alpha_{f_0,\pi_0}[f_0(t)]^{\epsilon_{f_0,\pi_0}}, \quad t\in [0,1]
\end{equation}
Suppose that $\alpha_{f_0,\pi_0}=1$. Then 
\begin{equation}\label{(14)}
f_0(1-t)=f_0(t),\qquad t\in [0,1]
\end{equation}
or
\begin{equation}\label{(15)}
f_0(1-t)=\overline{f_0(t)},\qquad t\in [0,1].
\end{equation}
If \eqref{(14)} holds, then $(T_1(f_0))(t)=f_0(1-t)=f_0(t)$, $t\in [0,1]$ by \eqref{(30)}, which is against to our choice of $f_0$. Thus \eqref{(14)} does not hold. Suppose that \eqref{(15)} holds. Then $f_0([0,1])=\overline{f_0([0,1])}$  holds, which means that $f_0\not\in W_1$. Thus \eqref{(15)} does not hold. It follows that $\alpha_{f_0,\pi_0}\ne 1$. Suppose that $\varepsilon_{f_0,\pi_0}=1$ for \eqref{(40)}. Then we have
\begin{equation}\label{(41)}
f_0(1-t)=(1-\alpha_{f_0,\pi_0})t+\alpha_{f_0,\pi_0}f_0(t),\quad t\in [0,1].
\end{equation}
Changing $1-t$ by $t$ we have 
\begin{equation}\label{(42)}
f_0(t)=(1-\alpha_{f_0,\pi_0})(1-t)+\alpha_{f_0,\pi_0}f_0(1-t),\quad t\in [0,1].
\end{equation}
Applying \eqref{(41)} we have
\begin{equation}\label{(43)}
f_0(t)=(1-\alpha_{f_0,\pi_0})(1-t)+\alpha_{f_0,\pi_0}\left((1-\alpha_{f_0,\pi_0})t+\alpha_{f_0,\pi_0}f_0(t)\right),\quad t\in [0,1].
\end{equation}
As $\alpha_{f_0,\pi_0}\ne 1$ we infer that
\begin{equation}\label{(44)}
(1+\alpha_{f_0,\pi_0})f_0(t)=1-(1-\alpha_{f_0,\pi_0})t,\qquad t\in [0,1].
\end{equation}
If $\alpha_{f_0,\pi_0}=-1$, then we have that $0=1-2t$ for every $t\in [0,1]$, which is a contradiction. Hence $\alpha_{f_0,\pi_0}\ne -1$. Then by \eqref{(44)} we have 
\[
f_0(t)=\frac{1}{(1+\alpha_{f_0,\pi_0})}-\frac{(1-\alpha_{f_0,\pi_0})}{(1+\alpha_{f_0,\pi_0})}t,\qquad t\in[0,1].
\]
Hence $f_0\in W_1$, which is a contradiction.
Suppose that $\epsilon_{f_0,\pi_0}=-1$ for \eqref{(40)}. Then we have
\begin{equation}\label{(45)}
f_0(1-t)=(1-\alpha_{f_0,\pi_0})t+\alpha_{f_0,\pi_0}\overline{f_0(t)}, \quad t\in [0,1].
\end{equation}
Substituting $1-t$ by $t$ in \eqref{(45)}, we have
\begin{equation}\label{(46)}
f_0(t)=(1-\alpha_{f_0,\pi_0})(1-t)+\alpha_{f_0,\pi_0}\overline{f_0(1-t)},\quad t\in [0,1].
\end{equation}
Substituting \eqref{(45)} in \eqref{(46)} we get
\begin{equation}\label{(47)}
f_0(t)=(1-\alpha_{f_0,\pi_0})(1-t)+\alpha_{f_0,\pi_0}\overline{(1-\alpha_{f_0,\pi_0})t+\alpha_{f_0,\pi_0}\overline{f_0(t)}},\quad t\in [0,1].
\end{equation}
Hence we get
\[
0=\left(\alpha_{f_0,\pi_0}\overline{(1-\alpha_{f_0,\pi_0})}-(1-\alpha_{f_0,\pi_0})\right)t+ (1-\alpha_{f_0,\pi_0}), \qquad t\in [0,1].
\]
We get $\alpha_{f_0,\pi_0}=1$, which contradicts to $\alpha_{f_0,\pi_0}\ne 1$. We conclude that \eqref{(13)} does not occur.

Suppose that \eqref{(13')} holds. Rewriting \eqref{(13')} by applying \eqref{(52)} we get
\begin{equation}\label{(50)}
f_0(1-t)=(T_1(f_0))(t)=(1+\alpha_{f_0,\pi_0})t-\alpha_{f_0,\pi_0}+\alpha_{f_0,\pi_0}[f_0(1-t)]^{\epsilon_{f_0,\pi_0}}.
\end{equation}
Suppose that $\epsilon_{f_0,\pi_0}=1$. By \eqref{(50)} we get
\begin{equation}\label{(53)}
(1-\alpha_{f_0,\pi_0})f_0(1-t)=(1+\alpha_{f_0,\pi_0})t-\alpha_{f_0,\pi_0},\qquad t\in [0,1].
\end{equation}
Then we have $0=1-2t$ for every $t\in [0,1]$ if $\alpha_{f_0,\pi_0}=1$, which is impossible, so that $\alpha_{f_0,\pi_0}\ne 1$. Then by \eqref{(53)} we get
\[
f_0(1-t)=\frac{1+\alpha_{f_0,\pi_0}}{1-\alpha_{f_0,\pi_0}}t-\frac{\alpha_{f_0,\pi_0}}{1-\alpha_{f_0,\pi_0}},\qquad t\in [0,1],
\]
so that
\[
f_0(t)=\frac{1+\alpha_{f_0,\pi_0}}{1-\alpha_{f_0,\pi_0}}(1-t)-\frac{\alpha_{f_0,\pi_0}}{1-\alpha_{f_0,\pi_0}},\qquad t\in [0,1],
\]
which is a contradiction to $f_0\in W_1$. We have that $\epsilon_{f_0,\pi_0}\ne 1$, hence $\epsilon_{f_0,\pi_0}=-1$. Then by \eqref{(50)} we get
\begin{equation}\label{(60)}
f_0(1-t)=(1+\alpha_{f_0,\pi_0})t-\alpha_{f_0,\pi_0}+\alpha_{f_0,\pi_0}\overline{f_0(1-t)}, \qquad t\in [0,1].
\end{equation}
Thus 
\begin{multline}\label{(61)}
f_0(1-t)=(1+\alpha_{f_0,\pi_0})t-\alpha_{f_0,\pi_0}\\
+\alpha_{f_0,\pi_0}\overline{((1+\alpha_{f_0,\pi_0})t-\alpha_{f_0,\pi_0}+\alpha_{f_0,\pi_0}\overline{f_0(1-t)})},\quad t\in [0,1].
\end{multline}
As $|\alpha_{f_0,\pi_0}|=1$ we get
\begin{multline}\label{(62)}
f_0(1-t)=(1+\alpha_{f_0,\pi_0})t-\alpha_{f_0,\pi_0}\\
+\alpha_{f_0,\pi_0}\overline{(1+\alpha_{f_0,\pi_0})}t-1+f_0(1-t),\quad t\in [0,1].
\end{multline}
Hence 
\begin{equation}\label{(63)}
0=\left((1+\alpha_{f_0,\pi_0})+\alpha_{f_0,\pi_0}(1+\overline{\alpha_{f_0,\pi_0}})\right)t-(\alpha_{f_0,\pi_0}+1),\qquad t\in [0,1].
\end{equation}
Thus $\alpha_{f_0,\pi_0}=-1$. Substituting $\alpha_{f_0,\pi_0}=-1$ into \eqref{(50)} we get
\[
f_0(1-t)=1-\overline{f_0(1-t)},\qquad t\in [0,1]
\]
since $\epsilon_{f_0,\pi_0}=-1$. Then 
\[
f_0([0,1])=1-\overline{f_0([0,1])},
\]
which contradicts to $f_0\in W_1$. It follows that \eqref{(13')} does not occur.

Assuming the existence of $f_0\in W_1$ such that $T_1(f_0)\ne f_0$ we arrived at the contradiction. We conclude that $T_1(f)=f$ for every $f\in W_1$. 

Let $g\in A_1$ Then by Proposition \ref{PsubsetBarU} there is a sequence $\{g_n\}$ in $W_1$ such that $\|g-g_n\|_{\infty}\to 0$ as $n\to \infty$. By the previous part of the proof we have
\begin{equation}\label{(70)}
T_1(g_n)=g_n
\end{equation}
for every $n$. By Lemma \ref{lemma0}, $T_1$ is an symmetry with respect to $\|\cdot\|_{\infty}$, we have $T_1(g)=g$ by letting $n\to \infty$ for \eqref{(70)}. We conclude that $T_1(g)=g$ for every $g\in A_1$ if $T_1(\pi_0)=\pi_0$. It follows that 
\[
T(g)=T(0)+\alpha[g]^{\epsilon},\qquad g\in A_1
\]
if $T_1(\pi_0)=\pi_0$. Suppose that $T_1(\pi_0)=\pi_1$. As we have already described, we see that $T_1(g)=g\circ\pi_1$ for every $g\in A_1$. Hence we have
\[
T(g)=T(0)+\alpha[g\circ\pi_1]^\epsilon,\qquad g\in A_1.
\]
Thus we observed that $T\in G_{\Pi_0}$. As we have already proved that $T_1$ is a surjective isometry from $A_1$ onto $A_2$, we see that $T$ is also a surjective isometry. Hence we conclude that $T\in \gpa$.
\end{proof}
\section{Surjective real-linear isometries on $\lip(K)$}\label{sec4}
Jarosz and Pathak  had exhibited in \cite[Example 8]{jp} the form of a surjective {\it complex-linear} isometry on the Banach algebra $\lip(K_j)$ with the norm $\|\cdot\|_{\Sigma}$ of the Lipschitz functions on a compact metric space $K_j$ by answering the question posed by Rao and Roy \cite{rr}.  After the publication of \cite{jp} some authors expressed their suspicion about the argument there and the validity of the statement there had not been confirmed until the correction \cite[Corollary 15]{szeged} was published by Hatori and Oi. In this section by applying \cite[Lemmas 10,11]{szeged} and \cite[Proposition 7]{spanish} we exhibit the form of a surjective real-linear isometry between the Banach algebras of Lipschitz functions. 
\begin{lemma}\label{13pegemae}
Let $p,q\in \mc$. Suppose that $|p+\lambda q|=1$ for at least three different unimodular $\lambda\in \mc$. Then $p=0$ and $|q|=1$, or $|p|=1$ and $q=0$.
\end{lemma}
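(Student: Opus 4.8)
The plan is to square the modulus condition, reduce it to a linear equation in the real part, and then count how many unimodular $\lambda$ can solve it. Since $|\lambda|=1$, expanding gives $|p+\lambda q|^2=|p|^2+|q|^2+2\operatorname{Re}(\bar p q\,\lambda)$, so the hypothesis $|p+\lambda q|=1$ is equivalent to
\[
\operatorname{Re}(\bar p q\,\lambda)=\frac{1-|p|^2-|q|^2}{2}=:c,
\]
a fixed real constant independent of $\lambda$.

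Next, set $w=\bar p q$ and suppose $w\ne 0$. Writing $\lambda=e^{i\theta}$ and $w=|w|e^{i\phi}$, the equation reads $|w|\cos(\theta+\phi)=c$. As $\theta$ runs over $[0,2\pi)$ the map $\theta\mapsto\cos(\theta+\phi)$ attains each value at most twice, so at most two unimodular $\lambda$ can satisfy the equation. This contradicts the assumption that at least three distinct $\lambda$ do. Hence $w=\bar p q=0$, which forces $p=0$ or $q=0$.

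Finally I would substitute back into the original equation. If $p=0$ then $|p+\lambda q|=|\lambda|\,|q|=|q|$, so the condition yields $|q|=1$ and we are in the case $p=0$, $|q|=1$. If $q=0$ then $|p+\lambda q|=|p|$, so $|p|=1$ and we are in the case $|p|=1$, $q=0$. The only delicate point is the ``at most twice'' count for the cosine, which is harmless; geometrically it merely records that the circle $\{p+\lambda q:|\lambda|=1\}$ of radius $|q|$ centred at $p$ meets the unit circle in at most two points unless the two circles coincide (precisely when $p=0$, $|q|=1$) or the first circle degenerates to a point (when $q=0$). I anticipate no genuine obstacle.
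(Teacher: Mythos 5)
Your proof is correct, but it follows a genuinely different route from the paper's. The paper argues geometrically: assuming $q\ne 0$, it divides through by $q$ to rewrite the hypothesis as $|p/q+\lambda_j|=1/|q|$, so that the three distinct unimodular points $\lambda_1,\lambda_2,\lambda_3$ lie simultaneously on the unit circle and on the circle of center $-p/q$ and radius $1/|q|$; by uniqueness of the circumscribed circle through three (non-collinear) points, the two circles coincide, forcing $p=0$ and $|q|=1$, while the case $q=0$ immediately gives $|p|=1$. You instead expand $|p+\lambda q|^2=|p|^2+|q|^2+2\operatorname{Re}(\bar p q\,\lambda)$ and observe that, when $w=\bar p q\ne 0$, the condition $\operatorname{Re}(w\lambda)=c$ is a trigonometric equation $|w|\cos(\theta+\phi)=c$ with at most two solutions per period, contradicting the existence of three distinct $\lambda$; hence $\bar pq=0$ and the two cases follow by substitution. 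In effect you replace the circumcircle-uniqueness argument by a radical-axis style count: the three common points would have to lie on a line, which meets the unit circle at most twice. Your version is more self-contained --- it never needs the (tacit, in the paper) observation that three distinct points of the unit circle are non-collinear so that a unique circumcircle exists --- and splits cases on $\bar pq=0$ rather than on $q=0$, obtaining the dichotomy $p=0$ or $q=0$ as an output of the computation instead of as an a priori case distinction. The paper's argument is shorter and more conceptual, reaching the conclusion directly rather than by contradiction. Both are sound.
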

\begin{proof}
Let $\lambda_1,\lambda_2,\lambda_3$ be three unimodular complex numbers such that $|p+\lambda_j q|=1$ for $j=1,2,3$. Suppose that $q\ne0$. Then $|p/q+\lambda_j|=1/|q|$, by which the circle of the center $-p/q$ and the radius $1/|q|$ is the outer tangent circle of the triangle defined by the three point $\lambda_1,\lambda_2,\lambda_3$. On the other hand, the unit circle is the outer tangent circle of  these three points since $|\lambda_j|=1$ for $j=1,2,3$. By the uniqueness of the outer tangent circle, we see that $|q|=1$ and $p/q=0$, thus $p=0$. On the other hand, if $q=0$, then it is apparent that $|p|=1$
\end{proof}
\begin{theorem}\label{lip}
Let $K_j$ be a compact metric space for $j=1,2$. Suppose that $U:\lip(K_1)\to \lip(K_2)$ is a surjective real-linear isometry with respect to the norm $\|f\|_{\Sigma}=\|f\|_{\infty}+L_f$ for $f\in \lip(K_1)$. Then there exists a surjective isometry $\pi:K_2\to K_1$ such that 
\[
U(f)=U(1)f\circ \pi,\qquad f\in \lip (K_1)
\]
or 
\[
U(f)=U(1)\overline{f\circ\pi},\qquad f\in \lip(K_1).
\]
\end{theorem}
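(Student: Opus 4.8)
The plan is to reduce the real-linear case to the already understood complex-linear classification of surjective $\Sigma$-isometries of $\lip$, and to use Lemma \ref{13pegemae} as the device that forces a single global choice between the two displayed forms. First I would extract the weighted-composition skeleton: applying \cite[Lemmas 10,11]{szeged} together with \cite[Proposition 7]{spanish} to the surjective real-linear isometry $U$ yields a surjective isometry $\pi:K_2\to K_1$, a unimodular weight $\tau\in\lip(K_2)$ (so $|\tau(y)|=1$ for every $y$), and a choice $\epsilon(y)\in\{\pm 1\}$ such that, for each $f\in\lip(K_1)$ and $y\in K_2$,
\[
U(f)(y)=\tau(y)\,[\,f(\pi(y))\,]^{\epsilon(y)},
\]
where $[\,z\,]^{1}=z$ and $[\,z\,]^{-1}=\bar z$. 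Taking $f=1$ gives $U(1)=\tau$, so it remains to prove that $\tau$ is a constant and that $\epsilon$ is constant; that $\pi$ is a metric isometry with $\pi$ surjective is already part of the skeleton, since $f\mapsto f\circ\pi$ preserves the Lipschitz seminorm $L$ precisely when $\pi$ is a surjective isometry.

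That $\tau$ is a unimodular constant is immediate from the sum norm. Since $|\tau|\equiv 1$ we have $\|\tau\|_{\infty}=1$, while $\|\tau\|_{\Sigma}=\|U(1)\|_{\Sigma}=\|1\|_{\Sigma}=1$; hence $L_\tau=\|\tau\|_{\Sigma}-\|\tau\|_{\infty}=0$ and $\tau$ is constant. Replacing $U$ by $V=\bar\tau U$, which is again a surjective real-linear $\Sigma$-isometry because multiplication by a unimodular constant preserves both $\|\cdot\|_{\infty}$ and $L$, reduces the problem to the unital map $V(f)(y)=[\,f(\pi(y))\,]^{\epsilon(y)}$.

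The heart of the argument, and the step I expect to be the main obstacle, is to show that $\epsilon$ is constant on $K_2$, so that $V$ is either complex-linear ($V(f)=f\circ\pi$) or conjugate-linear ($V(f)=\overline{f\circ\pi}$); a \emph{mixed} pattern must be excluded. Here the full sum norm is essential: unlike $\|\cdot\|_{\infty}$, it sees the difference functionals $(\delta_y-\delta_{y'})/d(y,y')$, which couple distinct points (and so rule out mixing even across different components of $K_2$). Suppose toward a contradiction that $\epsilon(y_1)=1$ and $\epsilon(y_2)=-1$ for some $y_1\neq y_2$. Testing $V$ on the family $\lambda g$ with $|\lambda|=1$ and pairing with $\Phi=(\delta_{y_1}-\delta_{y_2})/d(y_1,y_2)$, one computes $\Phi(V(\lambda g))=\lambda\bigl(g(\pi(y_1))+\mu\,\overline{g(\pi(y_2))}\bigr)/d(y_1,y_2)$ with $\mu=\bar\lambda^{2}$ ranging over all unimodular numbers. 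Using that $\Phi$ is an extreme norming functional of the sum norm (the content of \cite[Lemmas 10,11]{szeged}) and that $\|V(\lambda g)\|_{\Sigma}=\|g\|_{\Sigma}$ is independent of $\lambda$, one forces $|g(\pi(y_1))+\mu\,\overline{g(\pi(y_2))}|$ to take a fixed value for at least three unimodular $\mu$; Lemma \ref{13pegemae} then gives $g(\pi(y_1))=0$ or $g(\pi(y_2))=0$ for \emph{every} $g\in\lip(K_1)$, which is absurd. Making this last norming-attainment argument precise is the genuinely delicate point, and it is exactly the three-phase configuration for which Lemma \ref{13pegemae} is tailored. Thus no mixed pattern occurs and $\epsilon\equiv 1$ or $\epsilon\equiv -1$.

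Finally I would assemble the conclusion. If $\epsilon\equiv 1$ then $V(f)=f\circ\pi$, so $U(f)=\tau\,(f\circ\pi)=U(1)\,f\circ\pi$. If $\epsilon\equiv -1$, then $V\circ C$, where $C$ is the conjugation $f\mapsto\bar f$ (a conjugate-linear surjective $\Sigma$-isometry), is complex-linear of the first type; this yields $V(f)=\overline{f\circ\pi}$ and hence $U(f)=U(1)\,\overline{f\circ\pi}$. In both cases the surjective isometry $\pi:K_2\to K_1$ is the one produced in the first step, which completes the proof.
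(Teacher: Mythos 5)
There is a genuine gap, and it sits exactly where your proof starts. Your ``skeleton'' step claims that \cite[Lemmas 10,11]{szeged} together with \cite[Proposition 7]{spanish} already produce the form $U(f)(y)=\tau(y)\,[f(\pi(y))]^{\epsilon(y)}$ with $\pi:K_2\to K_1$ a surjective isometry. Those results say nothing of the sort: Lemmas 10 and 11 of \cite{szeged} only assert that certain points $(x_0,m_0,\gamma_0)$, and their phase rotations $(x_0,m_0,e^{i\theta}\gamma_0)$, lie in the Choquet boundary of the embedded space $B_2$, and Proposition 7 of \cite{spanish} says that a surjective real-linear $\Sigma$-isometry which is \emph{already normalized} so that it fixes $1$ and sends $i$ to $\pm i$ also preserves the supremum norm. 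Deriving the weighted-composition form is essentially the whole content of the theorem, and the paper obtains it through a chain that you skip entirely: the Choquet-boundary/Arens--Kelley argument plus Lemma \ref{13pegemae} to show that $U(1)$ and $U(i)$ are unimodular \emph{constants} with $U(i)=\pm iU(1)$; only then can Proposition 7 of \cite{spanish} be applied to $U_0=\overline{U(1)}U$; then Stone--Weierstrass extends $U_0$ to $C(K_j)$, Ellis's theorem \cite{ellis} gives the composition/conjugate-composition dichotomy governed by an open and closed set $E_2\subset K_2$, and finally \cite[Corollary 15]{szeged} is what upgrades the homeomorphism $\pi'$ to a metric isometry $\pi$. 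Note also the circularity in your ordering: you use the skeleton to conclude that $\tau=U(1)$ is constant, but the skeleton (through Proposition 7 of \cite{spanish}) is only available after one knows $U(1)$ is a unimodular constant.

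Your second main step, the exclusion of a mixed sign pattern, is also unsound as sketched. You test $V(\lambda g)$ against $\Phi=(\delta_{y_1}-\delta_{y_2})/d(y_1,y_2)$ and claim that, since $\|V(\lambda g)\|_{\Sigma}$ is independent of $\lambda$, the quantity $\left|g(\pi(y_1))+\mu\,\overline{g(\pi(y_2))}\right|$ must take a fixed value for three unimodular $\mu$, so that Lemma \ref{13pegemae} applies. But constancy of the norm does not constrain the values of one fixed functional: one only has $|\Phi(h)|\le L_h\le\|h\|_{\Sigma}$, with equality for very special $h$, so nothing forces $|\Phi(V(\lambda g))|$ to be independent of $\lambda$. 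Moreover $\Phi$ is not one of the extreme functionals provided by \cite[Lemmas 10,11]{szeged}; those have the form $f\mapsto f(x_0)+\gamma_0 D_2(f)(m_0)$, a point evaluation coupled with a derivative-type term, not a bare difference quotient. In the paper the mixed case is excluded by a softer mechanism that becomes available once the skeleton is honestly proved: Ellis's theorem yields $\widetilde{U_0}(f)=f\circ\pi'$ on $E_2$ and $\overline{f\circ\pi'}$ on $K_2\setminus E_2$ with $E_2$ open and closed, and since $U_0(i)$ is the constant function $i$ or $-i$, necessarily $E_2=K_2$ or $E_2=\emptyset$. So if you repair the first step along the paper's lines, the constancy of $\epsilon$ comes for free and your difference-functional argument is not needed; as written, neither of your two key steps stands.
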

\begin{proof}
As the same way as in the proof of \cite[Proposition 9]{szeged} we apply Choquet's theory. Let $j=1,2$.  Let $\mf_j$ be the Stone-\v Cech compactification of $\{(x,x')\in K_j^2:x\ne x'\}$. For $f\in \lip(K_j)$, let $D_j(f)$ denote the continuous extension to $\mf_j$ of the function $\left(f(x)-f(x')\right)/d(x,x')$ on $\{(x,x')\in K_j^2:x\ne x'\}$. Then $D_j:\lip(K_j)\to C(\mf_j)$ is well defined. We have $\|D_j(f)\|_{\infty}=L_f$ for every $f\in \lip(K_j)$. Then $(K_j,\mc, \lip(K_j),\lip(K_j))$ is an admissible quadruple of type L (see \cite[Definition 4, Example 12]{szeged}). 
Let  ${\mathbb T}=\{\z\in \mc:|z|=1\}$. 
For $j=1,2$, 
define a map 
\[
I_j:\lip(K_j)\to C(K_2\times \mf_2\times {\mathbb T})
\]
by $I_j(f)(x,m,\gamma)=f(x)+\gamma D_j(f)(m)$ for $f\in \lip(K_j)$ and $(x,m,\gamma)\in K_j\times \mf_j\times {\mathbb T}$. As $D_j$ is a complex-linear map, so is $I_j$. For simplicity we write $\tilde{f}=I_j(f)$ for $f\in \lip(K_j)$. For every $f\in \lip(K_j)$ we have
\[
\|\tilde{f}\|_{\infty}=\|f\|_{\infty}+\|D(f)\|_{\infty}=\|f\|_{\infty}+L_f,
\]
hence $I_j$ is a surjective complex-linear isometry from $\lip(K_j)$ onto $B_j=I_j(\lip(K_j))$. 
We have $D(1)=0$ and $\tilde{c}=c$ for every $c\in \mc$, where the constant function taking the value $c$ is denoted also by $c$. It follows that $B_j$ is a complex-linear closed subspace of $C(K_j\times \mf_j\times{\mathbb T})$ which contains $1$. A point $p=(x,m,{\mathbb T})\in K_2\times \mf_j\times {\mathbb T}$ is in the Choquet boundary $\ch B_j$ if the point evaluation $\delta_p$  is an extreme point of the closed unit ball $B_{j,1}^*$ of the (complex) dual space $B_j^*$ of $B_j$. See the description just after \cite[Proposition 9]{szeged}.
 Define $\SSS:B_1\to B_2$ by $\SSS(\tilde{f})=\widetilde{U(f)}$ for $\tilde{f}\in B_1$. Then $S$ is a surjective real-linear isometry from $B_1$ onto $B_2$. 

Let $x_0\in K_2$. 
Put $b_0(x)=1-\frac{d(x,x_0)}{d(K_2)}$ for $x\in K_2$, where $d(K_2)$ is the diameter of $K_2$. By a simple calculation we have $b_0\in \lip(K_2)$, $0\le b_0\le 1$ on $K_2$, and $b_0(x)=1$ if and only if $x=x_0$. Then by \cite[Lemma 10]{szeged} there exists a pair $(m_0,\gamma_0)\in \mf_2\times{\mathbb T}$ such that $(x_0,m_0,\gamma_0)\in \ch B_2$. By \cite[Lemma 11]{szeged} we have that $p_{\theta}=(x_0,m_0,e^{i\theta}\gamma_0)\in \ch B_2$ for every $0<\theta<\pi/2$.  For $\eta\in B_2^*$ we define $S_*(\eta)\in B_1^*$ by
\[
(\SSS_*(\eta))(\tilde{f})=\Real \eta(\SSS(\tilde{f}))-i\Real \eta(\SSS(i\tilde{f})),\qquad f\in B_1.
\]
Then $\SSS_*:B^*_2\to B_1^*$ is a surjective complex-linear isometry (cf. \cite[(2.3)]{mt}). Denote the set of all extreme points in $\{\nu\in B_j^*:\|\nu\|\le 1\}$ by $\ext B_j^*$. As $S_*$ is a surjective complex-linear isometry, we have that $\eta\in \ext B_2^*$ if and only if $\SSS_*(\eta)\in \ext B_1^*$. By the definition of the Choquet boundary, the point evaluation $\delta_{p_\theta}:B_2\to \mc$ defined by $\delta_{p_\theta}(\tilde{f})=\tilde{f}(p_{\theta})$, $f\in B_2$ is in $\ext B_2^*$. Then the Arens-Kelley theorem asserts that
\[
\SSS_*(\delta_{p_{\theta}})=\lambda_1 \delta_{p_1}
\]
for a unimodular $\lambda_1\in \mc$ and a $p_1\in \ch B_1$. In the same way there exist a unimodular $\lambda_i\in \mc$ and $p_i\in \ch B_1$ such that 
\[
\SSS_*(i\delta_{p_\theta})=\lambda_i \delta_{p_i}.
\]
By the same way as in the proof of \cite[Lemma 3.3]{mt} we have that $\lambda_i=i\lambda_1$ or $-i\lambda_1$. For a convenient of the readers we show a proof. As $p_{\theta}\in \ch B_2$, $\frac{1+i}{\sqrt{2}}\delta_{p_\theta}$ is in $\ext B_2^*$. Then there exists a unimodular $\mu\in \mc$ and a $q\in \ch B_1$ such that 
\[
\frac{1}{\sqrt{2}}\left(\lambda_1\delta_{p_1}+\lambda_i\delta_{p_i}\right)=S_*\left(\frac{1+i}{\sqrt{2}}\delta_{p_\theta}\right)=\mu\delta_q.
\]
Substituting $1\in B_1$ in this equation to get 
\[
\frac{1}{\sqrt{2}}(\lambda_1+\lambda_i)=\mu.
\]
As $|\mu|=1$ we have 
\[
|\lambda_1+\lambda_i|=\sqrt{2}.
\]
As $|\lambda_1|=|\lambda_i|=1$ we conclude that $\lambda_i=i\lambda_1$ or $\lambda_i=-i\lambda_1$. Put $\varepsilon_0=\lambda_i/\lambda_1$.

Let $c\in \mc$ be arbitrary. For simplicity we also write $c$ as the constant function which takes the value $c$. We have 
\begin{eqnarray}\label{hen1}
\left(\SSS_*(\delta_{p_{\theta}})\right)(\tilde{c})&=&\Real \delta_{p_{\theta}}(\SSS(\tilde{c}))-i\Real \delta_{p_\theta}(S(i\tilde{c})) \\
&=&\Real(\SSS(\tilde{c}))(p_{\theta})-i\Real (\SSS(i\tilde{c}))(p_{\theta}),\nonumber
\end{eqnarray}
and
\begin{eqnarray}\label{hen2}
\left(\SSS_*(i\delta_{p_\theta})\right)(\tilde{c})&=&\Real i\delta_{p_{\theta}}(\SSS(\tilde{c}))-i \Real i\delta_{p_\theta}(\SSS(i\tilde{c})) \\
&=&-\Imarg(\SSS(\tilde{c}))(p_{\theta})+i\Imarg(\SSS(i\tilde{c}))(p_{\theta}).\nonumber
\end{eqnarray}
As $\SSS_*(\delta_{p_{\theta}})=\lambda_1\delta_{p_1}$, we have by \eqref{hen1} that
\[
\Real (\SSS(\tilde{c}))(p_{\theta})=\Real \left(\SSS_*(\delta_{p_\theta})\right)(\tilde{c})=\Real \lambda_1(\tilde{c})(p_1)=\Real \lambda_1c.
\]
As $\SSS_*(i\delta_{p_\theta})=i\varepsilon_0\lambda_1\delta_{p_i}$, we have by  \eqref{hen2} that 
\[
\Imarg (\SSS(\tilde{c}))(p_\theta)=-\Real \left(\SSS_*(i\delta_{p_{\theta}})\right)(\tilde{c})=-\Real\left(i\varepsilon_0\lambda_1(\tilde{c})(p_i)\right)=\Imarg \varepsilon_0 \lambda_1c.
\]
Thus
\begin{equation}\label{280}
(\SSS(\tilde{c}))(p_{\theta})=\Real \lambda_1c+i\Imarg \varepsilon_0 \lambda_1c=
\begin{cases}
\lambda_1c,\quad \text{if}\quad \varepsilon_0=1, \\
\overline{\lambda_1c} \quad \text{if}\quad \varepsilon_0=-1
\end{cases}
\end{equation}
On the other hand we have by the definition of $\SSS$ that
\begin{equation}\label{290}
\SSS(\tilde{c})(p_{\theta})=(U(c))(x_0)+e^{i\theta}\lambda_0(D_2(U(c)))(m_0).
\end{equation}
Combining \eqref{280} and \eqref{290} we have
\begin{equation}\label{300}
\left|(U(c))(x_0)+e^{i\theta}\lambda_0(D_2(U(c)))(m_0)\right|=|c|
\end{equation}
for every $c\in \mc$. Substituting $c=1$ in \eqref{300} we get
\begin{equation}\label{301}
\left|(U(1))(x_0)+e^{i\theta}\lambda_0(D_2(U(1)))(m_0)\right|=1
\end{equation}
for $0<\theta <\pi/2$. 
By Lemma \ref{13pegemae} we have $(U(1))(x_0)=0$ or $(D_2(U(1)))(m_0)=0$. But $(U(1))(x_0)=0$ is impossible. The reason is as follows. Suppose that $(U(1))(x_0)=0$. Then $\left|(D_2(U(1)))(m_0)\right|=1$ by \eqref{301}. Hence $\|D_2(U(1))\|_{\infty}\ge1$. Since $U$ is an isometry we get
\[
1=\|U(1)\|_{\Sigma}=\|U(1)\|_{\infty}+\|D_2(U(1))\|_{\infty}\ge 1,
\]
hence $\|U(1)\|_{\infty}=0$, therefore $U(1)=0$ on $K_2$ and $D_2(U(1))=0$ follows, which is against to $\|D(U(1))\|_{\infty}\ge 1$. Thus we have that $(D_2(U(1)))(m_0)=0$, so that $|(U(1))(x_0)|=1$. By
\begin{multline*}
1\le|(U(1))(x_0)|+\|D_2(U(1))\|_{\infty} \\
\le\|U(1)\|_{\infty}+\|D_2(U(1))\|_{\infty}=\|U(1)\|_{\infty}+L_{U(1)}=1,
\end{multline*}
we conclude that $\|D_2(U(1))\|_{\infty}=0$, so $D_2(U(1))=0$, thus $U(1)$ is a constant function by the definition of $D_2$. As $|(U(1))(x_0)|=1$, we infer that $U(1)$ is a constant function of unit modulus. In the same way, substituting $c=i$ in \eqref{300} we have that $U(i)$ is a constant function of unit modulus. Since $U(1)-U(i)$ is a constant function we have
\[
\sqrt{2}=|1-i|=\|U(1)-U(i)\|_{\Sigma}=\|U(1)-U(i)\|_{\infty}
\]
as $0=L_{U(1)-U(i)}=D_2(U(1)-U(i))$. Since $U(1)$ and $U(i)$ are constant functions we infer that $U(i)=iU(1)$ or $U(i)=-iU(1)$. Put $U_0=\overline{U(1)}U$. Then $U_0$ is a surjective real-linear isometry from $\lip(K_1)$ onto $\lip(K_2)$ such that $U_0(1)=1$ and $U_0(i)=i$ or $-i$. Applying \cite[Proposition 7]{spanish} we see that $U_0$ is also an isometry with respect to the supremum norm $\|\cdot\|_{\infty}$ on $K_1$ and $K_2$ respectively, hence $U_0$ is extended to a surjective isometry $\widetilde{U_0}$ between the uniform closure of $\lip(K_j)$, which coincides with $C(K_j)$ by the Stone-Weierstrass theorem. Thus 
\[
\widetilde{U_0}:C(K_1)\to C(K_2)
\]
is a surjective real-linear isometry with respect to the supremum norm. As $K_j$ is the \v Silov boundary for $C(K_j)$ we can apply \cite[Theorem]{ellis} to have that there exists a homeomorphism $\pi':K_2\to K_1$ and an open and closed subset $E_2$ of $K_2$ such that 
\begin{equation*}
\widetilde{U_0(f)}= 
\begin{cases}
f\circ\pi',\quad \text{on}\quad E_2,\\
\overline{f\circ\pi'}, \quad \text{on}\quad K_2\setminus E_2
\end{cases}
\end{equation*}
for $f\in C(K_1)$ (cf. \cite{hm,m}). As $\widetilde{U_0}(i)=U_0(i)=i$ or $-i$ we have that
\[
\widetilde{U_0}(f)=f\circ \pi',\qquad f\in C(K_1)
\]
if $U_0(i)=i$ and 
\[
\widetilde{U_0}(f)= \overline{f\circ\pi'},\qquad f\in C(K_1)
\]
if $U_0(i)=-i$. It follows that $U_0$ is a complex-linear map if $U_0(i)=i$ and $\overline{U_0}$ is a complex-linear map if $U_0(i)=-i$. Applying \cite[Corollary 15]{szeged} there exists a surjective isometry $\pi:K_1\to K_2$ such that 
\[
U_0(f)=f\circ \pi,\qquad f\in \lip(K_1)
\]
if $U_0(i)=i$ and 
\[
\overline{U_0}(f)=f\circ\pi,\qquad f\in \lip(K_1)
\]
if $U_0(i)=-i$. It follows that 
\[
U(f)=U(1)f\circ\pi,\qquad f\in \lip(K_1)
\]
or 
\[
U(f)=U(1)\overline{f\circ\pi},\qquad f\in \lip(K_1).
\]
\end{proof}
Let $\Pi$ be the set of all surjective isometries from $K_2$ onto $K_1$. 
\begin{cor}\label{lipiso}
$G_{\Pi}(\operatorname{Lip}(K_1),\lip(K_2))=\operatorname{Iso}(\operatorname{Lip}(K_1),\lip(K_2))$
\end{cor}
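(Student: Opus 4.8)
The plan is to establish the two inclusions $G_{\Pi}(\lip(K_1),\lip(K_2))\subseteq\operatorname{Iso}(\lip(K_1),\lip(K_2))$ and the reverse separately. The forward inclusion is a direct verification. Given $T\in G_{\Pi}$, write $T(f)=\lambda+\alpha[f\circ\pi]^{\epsilon}$ with $\lambda\in\lip(K_2)$, $|\alpha|=1$, $\pi\in\Pi$ a surjective isometry of $K_2$ onto $K_1$, and $\epsilon\in\{\pm1\}$, and I would check that each factor is a surjective $\|\cdot\|_{\Sigma}$-isometry. Since $\pi$ is a surjective isometry, $f\mapsto f\circ\pi$ preserves $\|\cdot\|_{\infty}$ (by surjectivity of $\pi$) and $L_f$ (since $d(\pi(x),\pi(y))=d(x,y)$), hence preserves $\|\cdot\|_{\Sigma}$ and is a bijection of $\lip(K_1)$ onto $\lip(K_2)$; conjugation $f\mapsto\bar f$ and multiplication by the unimodular $\alpha$ likewise preserve $\|\cdot\|_{\Sigma}$ bijectively; and translation by $\lambda$ is a surjective isometry. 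Composing, $T$ is a surjective isometry, so $T\in\operatorname{Iso}(\lip(K_1),\lip(K_2))$.

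For the reverse inclusion I would start from an arbitrary $T\in\operatorname{Iso}(\lip(K_1),\lip(K_2))$, which is not assumed linear. The decisive reduction is the Mazur--Ulam theorem: viewing $\lip(K_j)$ as real Banach spaces under $\|\cdot\|_{\Sigma}$, the surjective isometry $T$ is affine, so $U:=T-T(0)$ is a surjective real-linear isometry of $\lip(K_1)$ onto $\lip(K_2)$ with $U(0)=0$. I would then apply Theorem \ref{lip} to $U$ to obtain a surjective isometry $\pi:K_2\to K_1$, so that $\pi\in\Pi$, with $U(f)=U(1)\,f\circ\pi$ or $U(f)=U(1)\,\overline{f\circ\pi}$ for all $f$. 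By the proof of Theorem \ref{lip}, $U(1)$ is a constant function of unit modulus; setting $\alpha=U(1)$ and $\lambda=T(0)\in\lip(K_2)$, and recalling $\overline{f\circ\pi}=[f\circ\pi]^{-1}$, I obtain $T(f)=\lambda+\alpha[f\circ\pi]^{\epsilon}$ with $\epsilon=1$ in the first case and $\epsilon=-1$ in the second. Hence $T\in G_{\Pi}(\lip(K_1),\lip(K_2))$.

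The only genuinely delicate point is the passage from a merely distance-preserving $T$ to a real-linear map, which is precisely where the Mazur--Ulam theorem is indispensable; once linearity is secured, Theorem \ref{lip} supplies the representation and the remaining bookkeeping (identifying $U(1)$ with the unimodular constant $\alpha$ and matching the presence of conjugation to the exponent $\epsilon$) is routine. I would present the inclusion $G_{\Pi}\subseteq\operatorname{Iso}$ first, since it is self-contained, and then the harder inclusion built on Theorem \ref{lip}.
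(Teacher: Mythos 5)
Your proposal is correct and follows essentially the same route as the paper: the hard inclusion $\operatorname{Iso}\subseteq G_{\Pi}$ is obtained by applying the Mazur--Ulam theorem to reduce to the real-linear isometry $U=T-T(0)$ and then invoking Theorem \ref{lip} (with $U(1)$ a unimodular constant and $\pi\in\Pi$), while the converse inclusion is the routine verification the paper leaves to the reader. Your explicit factorization of a generic element of $G_{\Pi}$ into composition with $\pi$, conjugation, multiplication by $\alpha$, and translation by $\lambda$ merely fills in that routine step in more detail than the paper does.
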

\begin{proof}
Suppose that $T\in \operatorname{Iso}(\operatorname{Lip}(K_1),\lip(K_2))$. By the Mazur-Ulam theorem we have that $U=T-T(0)$ is a surjective real-linear isometry from $\lip(K_1)$ onto $\lip(K_2)$. By Theorem \ref{lip} $U(1)$ is a constant function of unit modulus and there exists a surjective isometry $\pi\in \Pi$ such that $U(f)=U(1)f\circ\pi$ for every $f\in \lip(K_1)$ or $U(f)=U(1)\overline{f\circ\pi}$ for every $f\in \lip(K_1)$. It follows that $T=U+T(0)\in G_{\Pi}(\operatorname{Lip}(K_1),\lip(K_2))$.

Suppose that $T\in G_{\Pi}(\operatorname{Lip}(K_1),\lip(K_2))$. It is a routine work to show that $T$ is a surjective real-linear isometry.
\end{proof}
\section{Applications}\label{sec5}
In this section we study the problem on 2-locality for $C^1[0,1]$ and $\lip[0,1]$. We first prove that $\isonly(\lip[0,1])$ with the norm $\|\cdot\|_{\Sigma}$ is 2-local reflexive in $M(\lip[0,1])$. The Banach algebra $\lip[0,1]$   satisfies the three conditions 1), 2) and 3) for $A_j$ in the first part of section \ref{sec3}. 

Recall that $\pi_0$ is the identity map on the interval $[0,1]$, $\pi_1=1-\pi_0$ and $\Pi_0=\{\pi_0,\pi_1\}$.
\begin{theorem}
$\isonly(\lip[0,1])$ is 2-local reflexive in $M(\lip[0,1])$, where $\lip[0,1]$ is the Banach algebra of all Lipschitz functions defined on the closed interval $[0,1]$ with the norm $\|\cdot\|_{\Sigma}$.
\end{theorem}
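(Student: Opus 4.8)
The plan is to reduce the statement to the two main results already proved, namely Corollary~\ref{lipiso} and Theorem~\ref{2-localgeneral}, by correctly identifying the relevant parameter set $\Pi$ for the unit interval. Since the text preceding the statement records that $\lip[0,1]$ satisfies the three conditions 1), 2), 3) required of $A_j$ in Section~\ref{sec3}, the choice $A_1=A_2=\lip[0,1]$ is a legitimate instance to which Theorem~\ref{2-localgeneral} applies.

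The one genuinely new step is to determine $\Pi$, the set of all surjective isometries of the metric space $[0,1]$ onto itself. If $\phi\colon[0,1]\to[0,1]$ is such an isometry, then putting $y=0$ in $|\phi(x)-\phi(y)|=|x-y|$ gives $|\phi(x)-\phi(0)|=x$ for every $x$, whence $\phi(x)=\phi(0)+x$ or $\phi(x)=\phi(0)-x$; surjectivity onto $[0,1]$ forces either $\phi=\pi_0$ or $\phi=\pi_1=1-\pi_0$. Hence $\Pi=\{\pi_0,\pi_1\}=\Pi_0$. With this identification, Corollary~\ref{lipiso} applied to $K_1=K_2=[0,1]$ yields $G_{\Pi_0}(\lip[0,1])=\operatorname{Iso}(\lip[0,1])$. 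In particular $G_{\Pi_0}(\lip[0,1])\subseteq\operatorname{Iso}(\lip[0,1])$, so that $\gpa=G_{\Pi_0}\cap\operatorname{Iso}(\lip[0,1])=\operatorname{Iso}(\lip[0,1])=\isonly(\lip[0,1])$.

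Finally, Theorem~\ref{2-localgeneral} asserts that $\gpa$ is 2-local reflexive in $M(A_1,A_2)=M(\lip[0,1])$. Since the previous step identifies $\gpa$ with the full set $\isonly(\lip[0,1])$ of all surjective isometries, the 2-local reflexivity of $\gpa$ is exactly the assertion that $\isonly(\lip[0,1])$ is 2-local reflexive in $M(\lip[0,1])$, which is the desired conclusion. I do not anticipate any serious obstacle: essentially all the substantive work has already been carried out in the two cited results, and the only point requiring care is matching the abstract set $\Pi$ of metric isometries appearing in Corollary~\ref{lipiso} with the concrete $\Pi_0$ hard-wired into Theorem~\ref{2-localgeneral}, which is precisely what the elementary computation of the isometry group of $[0,1]$ accomplishes.
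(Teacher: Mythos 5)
Your proposal is correct and takes essentially the same route as the paper: both reduce the theorem to Corollary~\ref{lipiso} (yielding $G_{\Pi_0}(\lip[0,1])=\isonly(\lip[0,1])$) and then invoke Theorem~\ref{2-localgeneral} with $A_1=A_2=\lip[0,1]$ after checking conditions 1), 2), 3). Your explicit computation that the surjective isometries of the metric space $[0,1]$ are exactly $\pi_0$ and $\pi_1$ merely spells out the identification $\Pi=\Pi_0$ that the paper leaves implicit.
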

\begin{proof}
Corollary \ref{lipiso} asserts that $G_{\Pi_0}(\lip[0,1])=\isonly(\lip[0,1])$, hence $\isonly(\lip[0,1])=G_{\Pi_0}\cap\isonly(\lip[0,1])$. The Banach algebra $\lip[0,1]$ satisfies the conditions 1), 2) and 3) for $A_j$ in the first part of section \ref{sec3}. Applying Theorem \ref{2-localgeneral} for $A_j=\lip[0,1]$, we infer that $\isonly(\lip[0,1])$ is 2-local reflexive in $M(\lip[0,1])$.
\end{proof}
Next we prove that $\isonly(C^1[0,1])$ is 2-local reflexive in $M(C^1[0,1])$ for certain norms. Let $D$ be a non-empty connected compact subset of $[0,1]\times [0,1]$. The norm $\|\cdot\|_{\langle D\rangle}$ on $C^1[0,1]$ is defined by 
\[
\|f\|_{\langle D\rangle}=\sup_{(r,s)\in D}(|f(r)|+|f'(s)|),\quad f\in C^1[0,1].
\]
Let $P_j:[0,1]\times[0,1] \to [0,1]$ be the projection onto the $j$-th factor ($j=1,2$). 
Let $D$ be a non-empty connected compact subset of $[0,1]\times [0,1]$. The norm $\|\cdot\|_{\langle D\rangle}$ on $C^1[0,1]$ is defined by Kawamura, Koshimizu and Miura \cite{kkm} :
\[
\|f\|_{\langle D\rangle}=\sup_{(r,s)\in D}(|f(r)|+|f'(s)|),\quad f\in C^1[0,1].
\]
They study surjective real-linear isometries between $C^1[0,1]$ onto itself for the norm $\|\cdot\|_{\langle D\rangle}$ under additional hypothesis on $D$. The main result of \cite{kkm} exhibits the form of isometries on $C^1[0,1]$ for a wide class of norms and unifies the former results on isometries for several important norms such as $\|\cdot\|_{\Sigma}$, $\|\cdot\|_{\sigma}$, $\|\cdot\|_{\Delta}$ and so on. 
If $D=[0,1]\times [0,1]$, then $\|f\|_{\langle D\rangle}=\|f\|_{\infty}+\|f'\|_{\infty}$ for $f\in C^1[0,1]$. If $D=\{(t,t):t\in [0,1]\}$, then $\|f\|_{\langle D\rangle}=\sup\{|f(t)|+|f'(t)|:t\in [0,1]\}$. We point out that applying the Mazur-Ulam theorem, their results in fact assures the forms of surjective isometries without the assumption of linearity. 
\begin{theorem}
Let $D$ be a non-empty connected compact subset of $[0,1]\times [0,1]$. Suppose that $P_j(D)=[0,1]$ for $j=1,2$. Then 
$\operatorname{Iso}(C^1[0,1],\|\cdot\|_{\langle D\rangle})$ is 2-local reflexive in $M(C^1[0,1])$.
\end{theorem}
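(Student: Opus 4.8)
The plan is to follow verbatim the scheme that worked for $\lip[0,1]$: identify $\operatorname{Iso}(C^1[0,1],\|\cdot\|_{\langle D\rangle})$ with the group $G_{\Pi_0}\cap\operatorname{Iso}$ and then quote Theorem \ref{2-localgeneral}. The first, routine, step is to check that $(C^1[0,1],\|\cdot\|_{\langle D\rangle})$ satisfies the three standing hypotheses 1), 2), 3) imposed on $A_j$ at the beginning of Section \ref{sec3}. Conjugate-closedness is clear, since $\overline f\in C^1[0,1]$ with $(\overline f)'=\overline{f'}$; for a constant $c$ we have $\|c\|_{\langle D\rangle}=\sup_{(r,s)\in D}(|c|+0)=|c|$ because $D$ is nonempty; and $|\overline f(r)|=|f(r)|$ together with $|(\overline f)'(s)|=|f'(s)|$ gives $\|\overline f\|_{\langle D\rangle}=\|f\|_{\langle D\rangle}$. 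The inclusion $C^1[0,1]\subseteq A_j$ is automatic here, as $A_1=A_2=C^1[0,1]$.

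The decisive step is to show that every surjective isometry of $(C^1[0,1],\|\cdot\|_{\langle D\rangle})$ already lies in $G_{\Pi_0}(C^1[0,1])$, so that
\[
\operatorname{Iso}(C^1[0,1],\|\cdot\|_{\langle D\rangle})=G_{\Pi_0}(C^1[0,1])\cap\operatorname{Iso}(C^1[0,1],\|\cdot\|_{\langle D\rangle}).
\]
For this I would invoke Kawamura, Koshimizu and Miura \cite{kkm}: under the present hypotheses $P_1(D)=[0,1]$ and $P_2(D)=[0,1]$ they determine the surjective real-linear isometries of $(C^1[0,1],\|\cdot\|_{\langle D\rangle})$, and the only admissible changes of variable are $\pi_0$ and $\pi_1=1-\pi_0$. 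Concretely, given a surjective isometry $T$, the Mazur-Ulam theorem shows that $U=T-T(0)$ is a surjective real-linear isometry; applying \cite{kkm} to $U$ produces a unimodular $\alpha$, an $\epsilon\in\{\pm1\}$ and a $\pi\in\Pi_0$ with $U(f)=\alpha[f\circ\pi]^\epsilon$, whence $T(f)=T(0)+\alpha[f\circ\pi]^\epsilon$ and $T\in G_{\Pi_0}(C^1[0,1])$. The reverse inclusion $G_{\Pi_0}\cap\operatorname{Iso}\subseteq\operatorname{Iso}$ is trivial, so the displayed equality holds.

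With this identification in hand the conclusion is immediate. Theorem \ref{2-localgeneral}, applied with $A_1=A_2=C^1[0,1]$ equipped with $\|\cdot\|_{\langle D\rangle}$, asserts that $G_{\Pi_0}(C^1[0,1])\cap\operatorname{Iso}(C^1[0,1],\|\cdot\|_{\langle D\rangle})$ is 2-local reflexive in $M(C^1[0,1])$, and by the preceding equality this group is exactly $\operatorname{Iso}(C^1[0,1],\|\cdot\|_{\langle D\rangle})$.

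I expect the only genuine obstacle to be the second step, namely extracting from \cite{kkm} the precise statement that, under $P_j(D)=[0,1]$, the homeomorphisms occurring in a surjective isometry are confined to $\{\pi_0,\pi_1\}$ rather than some larger family of $C^1$-diffeomorphisms of $[0,1]$. The role of the two surjectivity hypotheses is precisely to force $\|\cdot\|_{\langle D\rangle}$ to control both $\|f\|_\infty$ and $\|f'\|_\infty$ on the full interval (indeed they make $\|\cdot\|_{\langle D\rangle}$ equivalent to $\|\cdot\|_\Sigma$), which is what rigidifies the change of variable to a reflection or the identity. Once this is read off from \cite{kkm}, everything else is bookkeeping together with an appeal to the Mazur-Ulam theorem.
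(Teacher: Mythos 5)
Your proposal is correct and takes essentially the same route as the paper: the paper's proof likewise invokes \cite[Corollary]{kkm} together with the Mazur--Ulam theorem to obtain $\operatorname{Iso}(C^1[0,1],\|\cdot\|_{\langle D\rangle})=G_{\Pi_0}\cap\operatorname{Iso}(C^1[0,1],\|\cdot\|_{\langle D\rangle})$, notes that $C^1[0,1]$ satisfies conditions 1)--3) of Section \ref{sec3}, and then applies Theorem \ref{2-localgeneral}. You merely spell out details (the verification of 1)--3) and the Mazur--Ulam reduction to the real-linear case) that the paper leaves implicit.
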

\begin{proof}
By \cite[Corollary]{kkm} and the Mazur-Ulam theorem we infer that $G_{\Pi_0}=\isonly(C^1[0,1])$. The Banach space $C^1[0,1]$ satisfies the conditions 1), 2) and 3) for $A_j$ in the first part of section \ref{sec3}. Applying Theorem \ref{2-localgeneral} for $A_j=C^1[0,1]$, we infer that $\isonly(C^1[0,1])$ is 2-local reflexive in $M(\lip[0,1])$.
\end{proof}

We point out that the 2-locality problem for surjective isometries without assuming linearity is much difficult than the 2-locality problem for surjective complex or even real linear isometries.
We do not know if $\isonly(C[0,1],\|\cdot\|_{\infty})$ is 2-local reflexive or not.

\subsection*{Acknowledgments}
The first author was supported by JSPS KAKENHI Grant Numbers JP16K05172, JP15K04921. He would like to express his heartful thanks to Lajos Moln\'ar for his hospitality during staying in the University of Szeged on Octorber 2018 and for mentioning the problem of 2-local reflexivity of the groups of all surjective isometries. 

\end{document}